\documentclass[a4paper,11pt]{article}

\usepackage{mathrsfs}
\usepackage{amsmath}
\usepackage[applemac]{inputenc}
\usepackage{amsfonts}
\usepackage{amssymb}
\usepackage{amsthm}
\usepackage{stmaryrd}
\usepackage{color}
\usepackage{mathabx}
\usepackage{graphicx}
\usepackage{caption}
\usepackage{subcaption}
\usepackage{enumerate}
\usepackage{pgfplots}
\usepackage{verbatim}
\usepackage{float}
\usepackage{booktabs}
\usepackage[
    colorlinks=true,
    linkcolor=green!50!black,
    citecolor=green!50!black,
    urlcolor=blue,
    linktoc=page
]{hyperref}
\pgfplotsset{compat=1.18}
\usetikzlibrary{external}

\usepackage{cancel}                                                             
\usepackage[normalem]{ulem}

\newcommand{\E}{\mathbb{E}}

\newcommand{\Z}{\mathbb{Z}}

\renewcommand{\S}{S_{\delta}}
\renewcommand{\SS}{S_{-\delta}}

\newcommand{\R}{\mathbb{R}}

\def\1{\mbox{1\hspace{-0.25em}l}}

\def\bx{{\mathbf{x}}}

\def\p{{\mathbf{p}}}

\def\l{\lambda}
\def\<{\langle}
\def\>{\rangle}

\numberwithin{equation}{section}

\newcommand{\be}{\begin{eqnarray}}
\newcommand{\ee}{\end{eqnarray}}
\newcommand{\ce}{\begin{eqnarray*}}
\newcommand{\de}{\end{eqnarray*}}
\newtheorem{theorem}{Theorem}[section]
\newtheorem{lemma}[theorem]{Lemma}
\newtheorem{remark}[theorem]{Remark}
\newtheorem{definition}[theorem]{Definition}
\newtheorem{proposition}[theorem]{Proposition}

\newtheorem{Examples}[theorem]{Example}
\newtheorem{corollary}[theorem]{Corollary}

\def\var{{\mathrm{var}}}

\def\eps{\varepsilon}

\def\a{\alpha}

\def\p{\partial}
\def\d{\delta}
\def\g{\gamma}
\def\l{\lambda}

\def\[{{\Big[}}
\def\]{{\Big]}}
\def\<{{\langle}}
\def\>{{\rangle}}
\def\({{\big(}}
\def\){{\big)}}

\def\bx{{\mathbf{x}}}

\def\min{{\mathord{{\rm min}}}}

\def\bb2{{\boldsymbol{2}}}

\def\={&\!\!=\!\!&}

\def\1{{\mathbf{1}}}

\def\E{\mathbb E}

\def\leq{\leqslant}
\def\ge{\geqslant}
\def\le{\leqslant}

\def\iint{\int\!\!\!\int}

\def\var{{\mathrm{var}}}

\def\eps{\varepsilon}

\def\a{\alpha}

\def\p{\partial}
\def\g{\gamma}
\def\l{\lambda}

\def\[{{\Big[}}
\def\]{{\Big]}}
\def\<{{\langle}}
\def\>{{\rangle}}

\def\bx{{\mathbf{x}}}

\def\min{{\mathord{{\rm min}}}}

\def\={&\!\!=\!\!&}
\def\bt{\begin{theorem}}
\def\et{\end{theorem}}
\def\bl{\begin{lemma}}
\def\el{\end{lemma}}
\def\br{\begin{remark}}
\def\er{\end{remark}}
\def\bx{\begin{Examples}}
\def\ex{\end{Examples}}
\def\bd{\begin{definition}}
\def\ed{\end{definition}}
\def\bp{\begin{proposition}}
\def\ep{\end{proposition}}
\def\bc{\begin{corollary}}
\def\ec{\end{corollary}}

\def\leq{\leqslant}
\def\ge{\geqslant}
\def\le{\leqslant}

\def\iint{\int\!\!\!\int}

 \def\R{\mathbb R}
 \def\R{\mathbb R}

\def\<{\langle} \def\>{\rangle}

\def\0{{\mathbf{0}}}

\def\b{\beta}

\title{Numerical methods for Langevin-type SPDE: an implicit Milstein approach and multilevel Monte Carlo techniques}
\author{Sascha Portaro\thanks{Dipartimento di Matematica, (AM)$^2$, Alma Mater Studiorum - Universit\`a di Bologna, 40126 Bologna, Italy, \texttt{sascha.portaro@unibo.it}} \and Carlos V\'azquez\thanks{Department of Mathematics and CITIC, University of A Coru\~{n}a, Campus Elvi\~{n}a, 15071-A Coru\~{n}a, Spain,\texttt{carlosv@udc.es}}}
\date{\today}
\date{\today}

\begin{document}

\maketitle

\begin{abstract}
In this work, we investigate the numerical approximation of degenerate Langevin-type stochastic partial differential equations (SPDEs) in two spatial dimensions. These SPDEs arise in stochastic dynamics and mathematical finance, among other applications. In order to handle the mixed deterministic-stochastic structure of the equation and the degeneracy of the differential operator, we propose a semi-implicit Milstein finite difference scheme for the numerical solution.

Through the Fourier analysis of the mean-square stability and convergence, we derive explicit conditions on the coefficients under which the scheme is stable, jointly with explicit convergence rates in terms of the discretization parameters. We further embed the proposed scheme within a Multilevel Monte Carlo (MLMC) framework to reduce the computational cost associated with SPDE simulations, and we derive its theoretical computational complexity. 

Numerical experiments confirm theoretical convergence rates and show that the MLMC strategy achieves an accuracy comparable to standard Monte Carlo at a fraction of the computational cost, reducing the complexity from $\mathcal{O}(\varepsilon^{-5})$ to $\mathcal{O}(\varepsilon^{-3})$ for a target root-mean-square error $\varepsilon$. These results show that combining semi-implicit Milstein schemes with MLMC techniques provides an effective approach for the numerical simulation of Langevin-type SPDEs.
\end{abstract}

\section{Introduction}
Stochastic partial differential equations (SPDEs) have emerged as essential modeling frameworks in a wide range of fields, including finance, environmental sciences, biology, engineering, and spatial statistics, where phenomena exhibit both spatial-temporal dynamics and inherent stochasticity~\cite{Lord_Powell_Shardlow_2014,LiuRoeckner2015}.

The numerical analysis of parabolic SPDEs has been the subject of extensive investigation in the literature, with particular attention devoted to the construction and analysis of high-order time discretization schemes~\cite{Lord_Powell_Shardlow_2014,JentzenKloeden2011}. Notably, in \cite{Giles_Reisinger2012} the authors proposed a Milstein finite difference scheme for equations in one spatial dimension, while in \cite{Reisinger_Wang2019} the authors extended these studies to two spatial dimensions.

In this work, we consider a different class of SPDEs, specifically degenerate Langevin-type SPDEs in two spatial dimensions, which are formulated as follows:
\begin{equation}\label{SPDE}
    d_\mathbf{Y} u_t(x,v) = \frac{a_t(x,v)}{2} \partial_{vv} u_t(x,v) dt + \sigma_t(x,v) \partial_v u_t(x,v) dW_t, \qquad \mathbf{Y} := \partial_t + v \partial_x,
\end{equation}

where $W$ denotes a Wiener process defined on a complete probability space $(\Omega, \mathscr{F}, P)$ endowed with a filtration $(\mathscr{F}_t)_{t \ge 0}$ satisfying the usual conditions.

This class of equations arises naturally in mathematical finance, where \eqref{SPDE} describes the price dynamics of path-dependent financial contracts, such as Asian options \cite{Pascucci_book2011}, driven by a common stochastic factor $W$ representing the market uncertainty.

Since, even in the deterministic setting (i.e., $\sigma \equiv 0$), analytic solutions for this type of equation are generally not available, various numerical schemes have been proposed over the years. For example, explicit and implicit schemes for a bounded value problem related to the Kolmogorov equation are studied in \cite{Polidoro_Mogavero1995}, while the authors in \cite{Di_Francesco_Pascucci2004} provided both theoretical results for Kolmogorov PDEs and associated numerical methods.

More recently, in the context of degenerate SPDEs,  the authors in \cite{Kamm_Pagliarani_Pascucci2023} demonstrated how the It\^o-stochastic Magnus expansion can be employed to efficiently solve such equations in two spatial dimensions numerically.

In the present work, we will focus on the constant coefficient case, i.e., when $a$ and $\sigma$ are constant, where an analytical solution to \eqref{SPDE} is available (see \cite{Pascucci_Pesce2019, Pascucci_Pesce2022}). The availability of a closed-form solution in this case helps us to check the validity of our numerical scheme and its convergence rate, whereas the scheme itself is (hopefully) more widely applicable.

First of all, let us clarify the concept of solution of the SPDE that we will consider. Since the symbol $d_\mathbf{Y}$ indicates that the equation is solved in the It\^o (or strong) sense, a solution to \eqref{SPDE} is a continuous process $u_t = u_t(x,v)$ that is twice differentiable in $v$ and such that
\begin{equation}\label{SPDE_gamma}
    u_t \left ( \g^B_t (x,v) \right ) = u_0(x,v) + \frac{1}{2} \int_0^t{( a_s \partial_{vv} u_s ) \left ( \g^B_s (x,v) \right ) ds} + \int_0^t{(\sigma_s \partial_v u_s) \left ( \g^B_s (x,v) \right ) dW_s},
\end{equation}
where $t \mapsto \g^B_t (x,v)$ denotes the integral curve of the advection field $v \p_x$ starting from $(x,v)$, that is
\begin{equation}
    \g^B_t (x,v) = e^{tB}(x,v) = (x+tv,v), \qquad B = \left ( \begin{matrix} 0 \quad 1 \\ 0 \quad 0 \end{matrix} \right ).
\end{equation}

Let $\overline{W}, W$ be independent real Brownian motions, $a>0$ and $\sigma \in [0,\sqrt{a}]$. The Langevin model is defined in terms of the system of SDEs
\begin{equation}\label{langevin_xv}
    \begin{cases}
        dX_t = V_t dt \\
        dV_t = \sqrt{a - \sigma^2} d\overline{W}_t - \sigma dW_t.
    \end{cases}
\end{equation}
We can interpret these equations as those that govern the displacement of a particle in a phase space, where the velocity of the particle, $V_t$, is only partially observed through the process $W$, where $\overline{W}$ represents the noise in the observation.

It can be shown that the SPDE \eqref{SPDE} is the forward Kolmogorov equation of the SDE \eqref{langevin_xv} conditioned to the Brownian observation given by $\mathscr{F}_t^W = \sigma(W_s, s \le t)$, see \cite{Pascucci_Pesce2019,Pascucci_Pesce2022}, for further details.

\section{Numerical method for the Langevin SPDE}

As implementing a numerical scheme to work directly on \eqref{SPDE} can be quite challenging, then we follow the alternative approach proposed in \cite{Polidoro_Mogavero1995} and \cite{Di_Francesco_Pascucci2004}. For this purpose, recalling the definition of a solution to \eqref{SPDE}, for $\d > 0$, we have
\begin{equation}\label{SPDE_t_d}
    u_{t+\d} \left ( \g^B_\d (x,v) \right ) = u_t(x,v) + \frac{1}{2} \int_t^{t+\d}{( a_s \partial_{vv} u_s ) \left ( \g^B_{s-t} (x,v) \right ) ds} + \int_t^{t+\d}{(\sigma_s \partial_v u_s) \left ( \g^B_{s-t} (x,v) \right ) dW_s}.
\end{equation}

If we now take the drift term in implicit form and the diffusion term in explicit form, we get a semi-implicit scheme. More precisely, we approximate $u$ in the first integral of \eqref{SPDE_t_d} as $u_s(x,v) \approx u_{t+\d}(x,v)$ and thus $(a_s \p_{vv}u_s) \left ( \g^B_{s-t} (x,v) \right ) \approx (a_{t+\d} \p_{vv} u_{t+\d}) \left ( \g^B_\d (x,v) \right ) = (a_{t+\d} \p_{vv} u_{t+\d}) (x+\d v,v)$. Then we can use a zero order approximation for the diffusion term (leading to a semi-implicit Euler-Maruyama scheme) or a first order one (leading to a semi-implicit Milstein scheme).

Now, let us split the time interval $[0,T]$ into $0=t_0<t_1<\dots<t_N=T$, where $t_n = n \d$, and let us write $U^n$ to indicate the solution at time $t_n$ on a space grid.

In the semi-implicit Euler-Maruyama scheme,  we approximate $u$ in the second integral in  \eqref{SPDE_t_d} as $u_s(x,v) \approx u_{t}(x,v)$ and thus $(\sigma_s \p_{v}u_s) \left ( \g^B_{s-t} (x,v) \right ) \approx (\sigma_t \p_{v}u_t) \left ( \g^B_0 (x,v) \right ) = (\sigma_t \p_{v}u_t) (x,v)$. Therefore, the SPDE could be approximated by 
\begin{equation}
    \S U^{n+1} = U^n + \frac{\d}{2} \S K_1 U^{n+1} + \sqrt{\d} Z_{n} K_2 U^n,
\end{equation}
where $K_1$ and $K_2$ are the discretizations of the operators $a_t \p_{vv}$ and $\sigma_t \p_v$ respectively, $Z_{n} \sim \mathcal{N}_{0,1}$ are independent normal random variables, for every $n = 0,\dots,N-1$. $\S$ is the shift matrix such that $u_t(x+\d v,v) = \S u_t(x,v)$, we will be more precise in the definition of the operator $\S$ later on.

In the semi-implicit Milstein scheme,  we approximate $u$ in the second integral as $u_s(x,v) \approx u_{t}(x,v) + \sigma_t \p_v u_t (x,v) (W_s-W_t)$ and thus $(\sigma_s \p_v u_s) \left ( \g^B_{s-t} (x,v) \right ) \approx (\sigma_t \p_v u_t + \sigma_t^2 \p_{vv} u_t) (x,v) (W_s-W_t)$. Recalling that $\int_t^{t+\d}{(W_s-W_t)dW_s} = \frac{1}{2} \left ( (\Delta W^{n+1})^2 - \d \right ) = \frac{\d}{2} (Z_{n+1}^2 - 1)$, the SPDE could be approximated by 
\begin{equation}\label{Milstein_scheme}
    \S U^{n+1} = U^n + \frac{\d}{2} \S K_1 U^{n+1} + \sqrt{\d} Z_{n} K_2 U^n + \d \frac{1}{2} (Z_{n}^2 - 1) K_3 U^n,
\end{equation}
where $K_3$ is the discretization of the operator $\sigma_t^2 \p_{vv} u_t$. This scheme can be written as
\begin{equation*}
    (I - \frac{\d}{2} K_1)U^{n+1} = \SS (I + \sqrt{\d} Z_{n+1} K_2 + \d \frac{1}{2} (Z_{n+1}^2 - 1) K_3) U^n,
\end{equation*}

Let us suppose that we are working on $t \in [0,T], x \in [0,L_1], v \in [0,L_2]$, we need a grid $G$ such that $(t_n,x_j+\d v_k,v_k) =  (t_n,x_{\bar{j}},v_k) \in G$, $\forall (t_n,x_j,v_k) \in G$ with $x_j+\d v_k \le L_1$.


A possible solution to this is again to follow \cite{Polidoro_Mogavero1995}, i.e. fixing $\d$ and $h_v$, and setting $h_x := \d h_v$. In this setting, now we have that $t_n = n\d, x_j = jh_x, v_k = kh_v$ and thus $(t_n,x_j+\d v_k,v_k) = (n\d,jh_x+\d kh_v,kh_v) = (n\d,(j+k)h_x,kh_v) = (t_n,x_{j+k},v_k)$, for every $n=0,\dots,N$, $j=1,\dots,N_x$, $k=1,\dots,N_v$ such that $k+j \le N_x$.

Therefore, we can define the shift matrix as follows
\begin{equation}\label{shift_matrix}
\S := \begin{bmatrix}
S & & & \\ & S^2 & & \\ & & \ddots & \\ & & & S^{N_v-1} \\ & & & & \mathrm{0}_{N_v}
\end{bmatrix} \in \R^{N_x N_v \times N_x N_v},
\end{equation}
which can be rewritten as $\S = \sum_{j=1}^{N_v}{E^{(j)} \otimes S^j}$, where $E^{(j)}  \in \R^{N_v \times N_v}$ are zero matrices with only $E^{(j)}_{j,j}=1$ and
\begin{equation}
    S := \begin{bmatrix}
        0 & 1 & & & \\  & 0 & 1 & & \\ & & \ddots & \ddots & \\ & & & 0 & 1 \\ & & & & 0
    \end{bmatrix} \in \R^{N_x \times N_x}.
\end{equation}
Notice that the matrix $\SS$ is the transpose of $\S$.

In general $N_v \ll N_x$, since $h_x = \d h_v$ and we are working in an almost square spatial domain. Moreover, if the spatial domain is large enough, we know from the theoretical results that the SPDE solution is negligible close to the boundary, enabling us to neglect its behavior in that region.


\section{Fourier stability and convergence analysis}

\subsection{Fourier analysis of mean-square stability}
In order to perform the Fourier analysis of mean-square stability, let us first recall the definition of the Fourier transform and inverse Fourier transform pair
\begin{align*}
    \hat{u}_t(\xi,\eta) &= \int_{\R^2}{u_t(x,v) e^{-i \langle (x,v),(\xi,\eta) \rangle} dx dv}, \\
    u_t(x,v) &= \frac{1}{4 \pi^2}\int_{\R^2}{\hat{u}_t(\xi,\eta) e^{i \langle (x,v),(\xi,\eta) \rangle} d\xi d\eta}.
\end{align*}
Now, we compute the Fourier transform of the composition $u_t \circ \g_\d^B$
\begin{align*}
    \mathcal{F}(u_t \circ \g_t^B)(\xi,\eta) &= \int_{\R^2}{(u_t \circ \g_t^B)(x,v) e^{-i \langle (x,v),(\xi,\eta) \rangle} dx dv} \\
    &= \int_{\R^2}{u_t(x,v) e^{-i \langle (x - t v,v),(\xi,\eta) \rangle} dx dv} \\
    &= \int_{\R^2}{u_t(x,v) e^{-i \langle (x,v),(\xi,\eta - t \xi) \rangle} dx dv} = \hat{u}_t(\xi,\eta - t \xi).
\end{align*}
Analogously, we obtain
\begin{align*}
    \mathcal{F}((a_t \p_{vv} u_t) \circ \g_t^B)(\xi,\eta) &= \mathcal{F}(a_t \p_{vv} u_t) (\xi,\eta-t\xi), \\ 
    \mathcal{F}((\sigma_t \p_v u_t) \circ \g_t^B)(\xi,\eta) &= \mathcal{F}(\sigma_t \p_v u_t) (\xi,\eta-t\xi).
\end{align*}
In the case of constant coefficients satisfying $a > 0$ and $\sigma \in [0, \sqrt{a})$, we get
\begin{align*}
    \mathcal{F}((a \p_{vv} u_t) \circ \g_t^B)(\xi,\eta) &= -(\eta-t\xi)^2 a \hat{u}_t (\xi,\eta-t\xi), \\
    \mathcal{F}((\sigma \p_v u_t) \circ \g_t^B)(\xi,\eta) &= i (\eta-t\xi) \sigma \hat{u}_t (\xi,\eta-t\xi).
\end{align*}

By using the previous computations, the Fourier transform of the SPDE \eqref{SPDE_gamma} is given by
\begin{equation}
    d\hat{u}_t(\xi,\eta-t\xi) = \left ( -\frac{1}{2} a (\eta-t\xi)^2 dt + i \sigma (\eta-t\xi) dW_t \right ) \hat{u}_t (\xi,\eta-t\xi),
\end{equation}
so that its solution can be written as follows
\begin{equation*}
    \hat{u}_t (\xi,\eta-t\xi) = X_t(\xi,\eta) e^{-i \langle (x_0,v_0),(\xi,\eta) \rangle},
\end{equation*}
where
\begin{equation}\label{X_t}
    X_t(\xi,\eta) := \exp \left ( \frac{1}{2} (\sigma^2-a) \left ( \eta^2 t - \xi \eta t^2 + \frac{\xi^2}{3} t^3 \right ) + i \sigma (\eta - t \xi) W_t \right ).
\end{equation}
Thus, by a change of variable, we get
\begin{equation}\label{Fourier_sol}
    \hat{u}_t (\xi,\eta) = X_t(\xi,\eta + t \xi) e^{-i \langle (x_0,v_0),(\xi,\eta + t \xi) \rangle}.
\end{equation}
From \eqref{Fourier_sol}, we have $\E[|X_t|^2]\xrightarrow[t \to +\infty]{} 0$,  which implies that $\E[\| \hat{u}_t\|_{L^2}^2] \xrightarrow[t \to +\infty]{} 0$ and, by isometry, we also have $\E[\|u_t\|_{L^2}^2] \xrightarrow[t \to +\infty]{} 0$.

For the numerical solution, we use a discrete-continuous Fourier decomposition. If we denote $U^n_{j,k} = u(n\d,j h_x, k h_v)$, then
\begin{equation*}
    U^0_{j,k} = \frac{1}{4\pi^2} \int_{-\frac{\pi}{h_v}}^{\frac{\pi}{h_v}}{\int_{-\frac{\pi}{h_x}}^{\frac{\pi}{h_x}}{\hat{U}^0(w,z) e^{i \left ( (j-j_0)h_x w + (k-k_0)h_v z \right )} dw dz}},
\end{equation*}
where $j_0 = x_0/h_x, k_0 = v_0/h_v$, and
\begin{equation*}
    \hat{U}^0(\xi,\eta) := h_x h_v \sum_{j \in \Z} \sum_{k \in \Z} U^0_{j,k} e^{-i \left ( (j-j_0) h_x \xi + (k-k_0) h_v \eta \right )}.
\end{equation*}
If we set $U^0_{j,k} = h_x^{-1} h_v^{-1} \d_{(j_0,k_0)}$, we have $\hat{U}^0(w,z) = 1$ for all $(w,z) \in \R^2$. \\
Similarly, for the $n$-th step
\begin{equation*}
    U^n_{j,k} = \frac{1}{4\pi^2} \int_{-\frac{\pi}{h_v}}^{\frac{\pi}{h_v}}{\int_{-\frac{\pi}{h_x}}^{\frac{\pi}{h_x}}{\hat{U}^n(\xi,\eta) e^{i \left ( (j-j_0) h_x \xi + (k-k_0) h_v \eta \right )} d\xi d\eta}}.
\end{equation*}

By analogy to the theoretical solution \eqref{Fourier_sol}, we make the ansatz
\begin{equation}
    \hat{U}^n(\xi,\eta) = X^n(\xi,\eta + n\d \xi) \hat{U}^0(\xi,\eta) = X^n(\xi,\eta + n\d \xi),
\end{equation}
since $\hat{U}^0 \equiv 1$, where $X^n(\xi,\eta)$ can be considered the numerical approximation of $ X(n\d)$.

Analogously to the continuous case, we say that a numerical solution is asymptotically mean-square stable if for any $(\xi,\eta) \in [-\frac{\pi}{h_x},\frac{\pi}{h_x}] \times [-\frac{\pi}{h_v},\frac{\pi}{h_v}]$, we have
\begin{equation}\label{mean_square_stability}
    \lim_{n \to + \infty}{\E[|X^n(\xi,\eta)|^2]} = 0.
\end{equation}


\begin{theorem}
    Let $T > 0$, and let $\d = \frac{T}{N}$, $h_v > 0$, $h_x > 0$ be the mesh sizes, such that $h_x = \d h_v$. If $a > 0$, $\sigma \in [0, \sqrt{a})$ and $2 \sigma^4 < a^2$, then the semi-implicit Milstein finite difference scheme \eqref{Milstein_scheme} is stable in the mean-square sense of \eqref{mean_square_stability}.
\end{theorem}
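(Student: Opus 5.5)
We carry out a von Neumann (Fourier) analysis of the scheme in its rearranged form
$$(I - \tfrac{\d}{2}K_1)U^{n+1} = \SS\bigl(I + \sqrt{\d} Z_{n+1}K_2 + \tfrac{\d}{2}(Z_{n+1}^2-1)K_3\bigr)U^n .$$
The plan is to reduce the mean-square stability condition \eqref{mean_square_stability} to a one-step amplification factor and then bound its second moment.

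\textbf{Step 1: Fourier symbols.} Take $K_1$, $K_2$, $K_3$ to be the standard central differences in $v$:
\begin{itemize}
\item $K_1 U_{j,k} = a\,(U_{j,k+1}-2U_{j,k}+U_{j,k-1})/h_v^2$;
\item $K_2 U_{j,k} = \sigma\,(U_{j,k+1}-U_{j,k-1})/(2h_v)$;
\item $K_3$ the same second difference as $K_1$, with $\sigma^2$ in place of $a$.
\end{itemize}
Acting on a mode $e^{i(jh_x\xi + kh_v\eta)}$, these become multiplication by
$$-\frac{4a}{h_v^2}\sin^2\!\Big(\frac{h_v\eta}{2}\Big), \qquad \frac{i\sigma}{h_v}\sin(h_v\eta), \qquad -\frac{4\sigma^2}{h_v^2}\sin^2\!\Big(\frac{h_v\eta}{2}\Big).$$
The shift $\SS$ maps $(j,k)\mapsto(j-k,k)$. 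Since $h_x=\d h_v$, it acts on the Fourier variable as the change $\eta\mapsto \eta+\d\xi$, which is exactly the discrete analogue of $\hat u_t(\xi,\eta-t\xi)$ in the continuous computation.

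This yields a recursion
$$X^{n+1}(\xi,\eta') = A_{n+1}(\eta')\,X^n(\xi,\eta'),$$
evaluated along the sheared frequency $\eta'=\eta+n\d\xi$, which is consistent with the ansatz $\hat U^n(\xi,\eta)=X^n(\xi,\eta+n\d\xi)$.

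\textbf{Step 2: the amplification factor.} Writing $s=\sin^2(h_v\eta'/2)$ and $c=\sin(h_v\eta')$, we get
$$A_{n+1} = \frac{1 + i\sqrt{\d}\,\sigma c/h_v - 2\d\sigma^2 s(Z_{n+1}^2-1)/h_v^2}{1+2\d a s/h_v^2}.$$
The $A_{n+1}$ are independent across $n$, and $X^n$ is independent of $Z_{n+1}$. Hence
$$\E|X^{n+1}|^2 = \E|A|^2\,\E|X^n|^2,$$
with $\E|A|^2$ evaluated at the current sheared frequency.

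Using $\E[Z^2-1]=0$, $\E[(Z^2-1)^2]=2$ and $\E[Z(Z^2-1)]=0$, the cross terms vanish and
$$\E|A|^2 = \frac{1+\d\sigma^2c^2/h_v^2 + 8\d^2\sigma^4 s^2/h_v^4}{(1+2\d a s/h_v^2)^2}.$$
Put $\lambda=\d/h_v^2$ and use $c^2=4s(1-s)\le 4s$. It then suffices to show
$$4\lambda\sigma^2 s + 8\lambda^2\sigma^4 s^2 < 4\lambda a s + 4\lambda^2a^2s^2 \qquad \text{for } s\in(0,1].$$
This holds term by term, because $\sigma^2<a$ and $2\sigma^4<a^2$. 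Note that $2\sigma^4<a^2$ is precisely the condition making the Milstein correction term dominated by the implicit quadratic term, which is why it appears in the hypothesis. Consequently $\E|A|^2<1$ whenever $s>0$, unconditionally in $\lambda$.

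\textbf{Step 3: iterating, the main obstacle.} Since the frequency at which $A$ is evaluated changes with $n$, we get
$$\E|X^n|^2=\prod_{m}\rho(\eta+m\d\xi), \qquad \rho<1 \text{ except where } s=0.$$
The difficulty is to show that this product tends to $0$, and not merely that it is non-increasing.

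For $\xi=0$ (and $\eta\ne0$ mod $2\pi/h_v$), $\rho$ is a constant strictly less than $1$, so the product decays geometrically.

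For $\xi\neq0$, the sheared frequencies $\eta+m\d\xi$ mod $2\pi/h_v$ form an arithmetic progression on the circle. Such a progression cannot remain in a small neighbourhood of the zeros of $s$ for a positive fraction of times unless the step $\d\xi h_v$ is a multiple of $2\pi$. Since $|\xi|\le\pi/h_x$ and $h_x=\d h_v$, this step is at most $\pi$ in absolute value, so that degenerate case is excluded. Therefore infinitely many factors are bounded by some $\bar\rho<1$, and the product tends to $0$.

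The remaining degenerate mode $s\equiv0$, i.e.\ $\xi=0$, $\eta=0$, is mass-conserving, as in the continuous case. I would treat it as excluded, as is implicitly done for the continuous $X_t$.
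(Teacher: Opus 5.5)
Your argument is correct, and Steps 1--2 are the paper's own computation. The paper also reduces to a one-step factor $C_n$, finds $\E|C_n|^2=\bigl(1+\delta\sigma^2q_1^2+\tfrac{\delta^2\sigma^4}{2}q_1^4\bigr)/\bigl(1+\tfrac{\delta}{2}aq_2\bigr)^2$ with $q_1=\sin(\eta h_v)/h_v$ and $q_2=4\sin^2(\eta h_v/2)/h_v^2$, and concludes from $\sigma^2q_1^2<aq_2$ and $\sigma^4q_1^4<\tfrac{a^2}{2}q_2^2$. That is your term-by-term comparison.

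One minor difference: the paper takes $K_3=\sigma^2D_v^2=K_2^2$, the square of the central first difference. Its symbol is $-\sigma^2q_1^2$ rather than your $-\sigma^2q_2$. Since $q_1^2=q_2\cos^2(\eta h_v/2)\le q_2$, its Milstein contribution is $8\lambda^2\sigma^4s^2(1-s)^2\le 8\lambda^2\sigma^4s^2$, so your inequality covers the paper's discretization a fortiori.

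The substantive difference is Step 3, which the paper does not carry out. It writes $C_n$ as a function of the fixed $\eta$, recording the symbol of $S_{-\delta}D_v$ as $q_1(\eta)$ rather than at the sheared frequency $\eta+\delta\xi$. It then argues that, since the $C_n$ are independent, $\E|C_n|^2<1$ for $(\xi,\eta)\neq(0,0)$ suffices. Taken literally, this fails on the whole line $\eta=0$, $\xi\neq0$, where that $C_n\equiv1$. More generally, a pointwise bound $\rho<1$ at frequencies that drift with $n$ does not by itself force the product to $0$. Your observation closes this gap: for $\xi\neq0$ the decay comes from the shear moving $\eta+m\delta\xi$ away from the zero of $s$, the discrete analogue of the $\xi^2t^3/3$ term in $X_t$. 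So your route is the more complete one.

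When writing it up, tighten three points.
\begin{enumerate}
\item The numerator of your $A_{n+1}$ is missing $Z_{n+1}$ in the term $i\sqrt{\delta}\sigma c/h_v$. Your second-moment formula does use it correctly.
\item The implicit factor acts after the shift, so in each step the numerator and denominator are evaluated at frequencies one shear step apart. After regrouping, $\E|X^n|^2$ is a product of your $\rho$ at consecutive sheared frequencies, times a boundary factor bounded by a constant depending only on $\lambda$ and $\sigma$. That factor is harmless, but it should be stated.
\item Replace the ``positive fraction of times'' heuristic by a concrete estimate. Consecutive angles $h_v(\eta+m\delta\xi)$ differ by $\alpha=h_x\xi$ with $0<|\alpha|\le\pi$. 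Hence at least one of any two consecutive angles lies at circular distance at least $|\alpha|/2$ from $2\pi\mathbb{Z}$. There $s\ge\sin^2(|\alpha|/4)$, so by continuity of $\rho$ and Step 2, $\rho\le\bar\rho<1$. This gives $\E|X^n|^2\le C\bar\rho^{\lfloor (n-1)/2\rfloor}\to0$.
\end{enumerate}
Excluding $(0,0)$, where the factor is identically $1$, is necessary, and the paper does the same.
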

\proof
First, for convenience, let us rewrite the scheme \eqref{Milstein_scheme} in the form
\begin{equation*}
    (I - \frac{\d}{2} K_1)U^{n+1} = \SS (I + \sqrt{\d} Z_{n+1} K_2 + \d \frac{1}{2} (Z_{n+1}^2 - 1) K_3) U^n,
\end{equation*}
where $K_1 = a D_{vv}, K_2 = \sigma D_v$ and $K_3 = \sigma^2 D_v^2$. 

For simplicity, next we assume that $j_0 = k_0 = 0$, and we compute
\begin{align*}
    \mathcal{F}(\SS D_v U)(\xi,\eta) &= h_x h_v \sum_{j \in \Z} \sum_{k \in \Z} (\SS \frac{1}{2 h_v} \left ( U_{j,k+1} - U_{j,k-1} \right ) ) e^{-i \left ( j h_x \xi + k h_v \eta \right )} \\
    &= h_x h_v \sum_{j \in \Z} \sum_{k \in \Z} \frac{1}{2 h_v} \left ( U_{j-k-1,k+1} - U_{j-k+1,k-1} \right ) e^{-i \left ( j h_x \xi + k h_v \eta \right )} \\
    &= h_x h_v \sum_{j \in \Z} \sum_{k \in \Z} U_{j,k} e^{-i \left ( (j+k) h_x \xi + k h_v \eta \right )} \frac{1}{2 h_v} \left ( e^{-i \eta h_v} - e^{i \eta h_v} \right ) \\
    &= h_x h_v \sum_{j \in \Z} \sum_{k \in \Z} U_{j,k} e^{-i \left ( j h_x \xi + k h_v (\eta + \d \xi) \right )} \\
    &= i q_1(\eta)\hat{U}(\xi,\eta + \d \xi), \qquad q_1(\eta) := \frac{\sin(\eta h_v)}{h_v},
\end{align*}
where we have applied the identity $e^{-i \eta h_v} - e^{i \eta h_v} = i 2 \sin(\eta h_v)$. 

Analogously, we obtain
\begin{equation*}
    \mathcal{F}(\SS D_v^2 U)(\xi,\eta) = - q_1^2(\eta) \hat{U}(\xi,\eta + \d \xi).
\end{equation*}
Moreover, we consider
\begin{align*}
    D_{vv} U_{j,k} &= \frac{1}{h_v^2} \left ( U_{j,k+1} - 2 U_{j,k} + U_{j,k-1} \right ) \\
    &= \frac{1}{4\pi^2} \int_{-\frac{\pi}{h_v}}^{\frac{\pi}{h_v}}{\int_{-\frac{\pi}{h_x}}^{\frac{\pi}{h_x}}{\hat{U}(\xi,\eta) e^{i \left ( j\xi h_x + k\eta h_v \right )} \frac{1}{h_v^2} \left ( e^{i \eta h_v} - 2 + e^{-i \eta h_v} \right ) d\xi d\eta}}; \\
\end{align*}
so that we get
\begin{align*}
    \mathcal{F} (D_{vv} U)(\xi,\eta) &= - q_2(\eta) \hat{U}(\xi,\eta), \qquad q_2(\eta) := \frac{4\sin^2(\frac{\eta h_v}{2})}{h_v^2}; \\
\end{align*}
where we have used the identity $e^{i \eta h_v} - 2 + e^{-i \eta h_v} = - 4\sin^2(\frac{\eta h_v}{2})$.

Therefore, applyng the Fourier transform to \eqref{Milstein_scheme} we obtain
\begin{equation*}
    \left ( 1 + \frac{\d}{2} a q_2(\eta) \right ) X(\xi,\eta + (n+1) \d \xi)^{n+1} = \left ( 1 + i \sqrt{\d} \sigma Z_n q_1(\eta) - \d \frac{\sigma^2}{2} \left ( Z_n^2 - 1 \right ) q_1^2(\eta) \right ) X(\xi,\eta + (n+1) \d \xi)^n,
\end{equation*}
from which we obtain the expression
\begin{equation}\label{C_n}
    X(\xi,\eta + (n+1) \d \xi)^{n+1} = C_n X(\xi,\eta + (n+1) \d \xi)^n, \qquad C_n := \frac{1 + i \sqrt{\d} \sigma Z_n q_1(\eta) - \d \frac{\sigma^2}{2} \left ( Z_n^2 - 1 \right ) q_1^2(\eta)}{1 + \frac{\d}{2} a q_2(\eta)},
\end{equation}
Recursively using the previous expression and that $X^0 \equiv 1$, we get
\begin{equation}\label{X_N}
    X(\xi,\eta + (n+1) \d \xi)^{n+1}  = \prod_{l = 0}^n C_l X(\xi,\eta + (n+1) \d \xi)^0 = \prod_{l = 0}^n C_l.
\end{equation}

Next, since the $C_n$ are independent, a sufficient condition for stability is to prove that
\begin{equation}\label{suff_cond_stability}
    \E \left [ \left | \frac{1 + i \sqrt{\d} \sigma Z_n q_1(\eta) - \d \frac{\sigma^2}{2} \left ( Z_n^2 - 1 \right ) q_1^2(\eta)}{1 + \frac{\d}{2} a q_2(\eta)} \right |^2 \right ] < 1,
\end{equation}
for any $(\xi,\eta) \in [-\frac{\pi}{h_x},\frac{\pi}{h_x}] \times [-\frac{\pi}{h_v},\frac{\pi}{h_v}] \backslash \{ (0,0) \}$.

In order to prove \eqref{suff_cond_stability}, note that
\begin{equation*}
    | 1 + \frac{\d}{2} a q_2(\eta) |^2 = 1 + \d a q_2(\eta) + \frac{\d^2}{4} a^2 q_2^2(\eta),
\end{equation*}
and
\begin{align*}
    &\left | 1 + i \sqrt{\d} \sigma Z_n q_1(\eta) - \d \frac{\sigma^2}{2} \left ( Z_n^2 - 1 \right ) q_1^2(\eta) \right |^2 \\
    &= \left ( 1 - \d \frac{\sigma^2}{2} \left ( Z_n^2 - 1 \right ) q_1^2(\eta) \right )^2 + \left ( \sqrt{\d} \sigma Z_n q_1(\eta) \right )^2 \\
    &= 1 - \d \sigma^2 \left ( Z_n^2 - 1 \right ) q_1^2(\eta) + \d^2 \frac{\sigma^4}{4} \left ( Z_n^2 - 1 \right )^2 q_1^4(\eta) + \d \sigma^2 Z_n^2 q_1^2(\eta).
\end{align*}
Next, since $\E \left [ Z_n^2 \right ] = 1$ and $\E \left [ Z_n^4 \right ] = 3$, by taking the expectation, we have
\begin{equation*}
    \E \left [ \left | 1 + i \sqrt{\d} \sigma Z_n q_1(\eta) - \d \frac{\sigma^2}{2} \left ( Z_n^2 - 1 \right ) q_1^2(\eta) \right |^2 \right ] = 1 + \d^2 \frac{\sigma^4}{2} q_1^4(\eta) + \d \sigma^2 q_1^2(\eta).
\end{equation*}
Moreover, since $\sigma^2 < a$, $2 \sigma^4 < a^2$, then the following inequalities hold
\begin{itemize}
    \item $\sigma^2 q_1^2(\eta) < a q_2(\eta)$;
    \item $\sigma^4 q_1^4(\eta) < \frac{a^2}{2} q_2^2(\eta)$.
\end{itemize}
and we end the proof.

\endproof

\subsection{Fourier analysis of convergence}
\begin{theorem}\label{convergence_thm}
    Let $T > 0, \d = \frac{T}{N}, h_v > 0, h_x = \d h_v$ be the mesh sizes. Then, if $\sigma^2 < a$ and $2\sigma^4 < a^2$, there exist $\theta \in (0,1)$, independent of $h_v$ and $\d$, such that the implicit Milstein finite difference scheme \eqref{Milstein_scheme} has the error expansion
    \begin{equation}
        U_{j,k}^N - u_T(x_j,v_k) = \d E_1(T,x_j,v_k) + h_v^2 E_2(T,x_j,v_k) + \theta^N h_v^{-2} \d^{-1} E_3(T,x_j,v_k) + o(\d,h_v^2,\theta^N h_v^{-2} \d^{-1}) R(T,x_j,v_k),
    \end{equation}
    where $x_j = j h_x, v_k = k h_v$, $E_1,E_2,E_3$ and $R$ are random variables with bounded first and second moments, all independent of $h_v$ and $\d$.
\end{theorem}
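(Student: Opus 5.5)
Following Giles–Reisinger \cite{Giles_Reisinger2012}, I would work entirely in Fourier space and compare the exact solution with the numerical one through the inverse transform at the grid points.

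By the previous section, the numerical solution is
$$U^N_{j,k}=\frac{1}{4\pi^2}\int\!\!\int_{[-\pi/h_x,\pi/h_x]\times[-\pi/h_v,\pi/h_v]}\prod_{l=0}^{N-1}C_l(\eta_l)\,e^{i(jh_x\xi+kh_v\eta)}\,d\xi\, d\eta ,$$
where $\eta_l$ denotes the sheared frequency entering the $l$-th step. By \eqref{Fourier_sol}, the exact solution is the same inverse transform of $X_T$ taken over all of $\R^2$. I would split the domain into a low-wavenumber region $\Omega_{\rm low}=\{|\eta|\le \delta^{-\alpha}\}\cap\{|\xi|\lesssim \delta^{-\alpha'}\}$, with small exponents, and its complement, and write
$$U^N_{j,k}-u_T(x_j,v_k)=I_{\rm low}+I_{\rm high}^{\rm num}-I_{\rm high}^{\rm exact}.$$

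\textbf{Low wavenumbers.} On $\Omega_{\rm low}$ I compare the one-step factor $C_l$ with the exact one-step factor of \eqref{X_t}. For fixed Brownian path, take logarithms: $\log C_l$ is expanded in $\sqrt\delta Z_l$ using $q_1(\eta)=\eta-\tfrac16 h_v^2\eta^3+\dots$ and $q_2(\eta)=\eta^2-\tfrac1{12}h_v^2\eta^4+\dots$. One checks that the Milstein correction $-\tfrac{\delta\sigma^2}{2}(Z_l^2-1)q_1^2$ cancels the $\tfrac12(i\sigma\sqrt\delta Z_l q_1)^2$ term of the logarithm up to $-\tfrac12\sigma^2\delta q_1^2$, which is the Itô correction present in $X_T$.

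Summing over $l$ gives
$$\sum_l \log C_l=\log X_T+\delta\,A+h_v^2\,B+\text{remainder},$$
where $A$ collects the $O(\delta^{3/2}Z_l^3)$ and $O(\delta^2)$ local terms together with the Riemann-sum error of $\sum_l \delta(\eta-t_l\xi)^2$ versus $\int_0^T(\eta-t\xi)^2\,dt$, and $B$ comes from the $h_v^2$ expansions of $q_1$ and $q_2$. Summed sums of the form $\sum_l \delta^{3/2}(Z_l^3-3Z_l)$ are martingales of size $O(\delta)$ in $L^2$, so they contribute to $E_1$. Exponentiating and using the Gaussian decay $e^{-\frac12(a-\sigma^2)(\eta^2T-\xi\eta T^2+\xi^2T^3/3)}$ to absorb polynomial factors in $(\xi,\eta)$ yields $\delta E_1+h_v^2E_2+o(\cdot)$. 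The resulting $E_1,E_2$ are inverse Fourier transforms of polynomial-times-$X_T$ integrands, with bounded second moments independent of the mesh.

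\textbf{High wavenumbers.} The exact contribution is super-polynomially small because of the Gaussian decay. For the numerical contribution I would use independence of the $Z_l$ and the stability theorem. Its second moment is at most
$$|\Omega|^2\sup_{\Omega^c}\prod_l\E|C_l|^2 .$$
Here I need a uniform bound $\E|C_l|^2\le\theta^2<1$ away from the origin. This follows from the computation in the stability proof, $\E|C_l|^2=\dfrac{1+\delta\sigma^2q_1^2+\tfrac12\delta^2\sigma^4q_1^4}{1+\delta aq_2+\tfrac14\delta^2a^2q_2^2}$, once $\delta q_2$ is bounded below on the high region. The domain area is $|\Omega|=4\pi^2/(h_xh_v)=4\pi^2/(\delta h_v^2)$. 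This is exactly the origin of the term $\theta^N h_v^{-2}\delta^{-1}E_3$.

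\textbf{Main obstacle.} The hard step is getting a $\theta$ uniform in $h_v$ and $\delta$. The ratio above tends to $1$ as $\delta q_2\to0$, and near $\eta h_v=\pi$ one has $q_1\to0$ while $q_2$ is maximal. I would therefore choose the cutoff so that the high region is $\{\delta q_2(\eta)\ge c\}$ for a fixed $c$. Using $\sigma^2<a$ and $2\sigma^4<a^2$, the ratio is then at most some $\theta^2(c)<1$.

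The shear $\eta+l\delta\xi$ means $\eta_l$ moves through frequencies, so not every step lies in the high region. I would handle this by counting the steps for which $\delta q_2(\eta_l)\ge c$, which is a positive fraction of $N$ except on a set of $(\xi,\eta)$ that is of low type and is treated by the expansion. Any intermediate band, with $\delta^{-\alpha}\le|\eta|\le c'\delta^{-1/2}$, is controlled by the ratio bound $1-c''\delta\eta^2$, giving decay $e^{-c''T\eta^2}$, which is $o(\delta)$.
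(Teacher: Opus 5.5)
You follow the paper's route. You split into low and high frequencies, expand $\log C_n$ so that the Milstein term cancels the quadratic term of the logarithm (Proposition \ref{error_small_eta}), bound the mean-square amplification factor using $\sigma^2<a$ and $2\sigma^4<a^2$, and obtain the area factor $4\pi^2/(\delta h_v^2)$ that produces $\theta^N h_v^{-2}\delta^{-1}$. You are more careful than the paper in two respects: you track the shear $\eta_l=\eta+(N-l)\delta\xi$, whereas the paper lets $C_n$ depend on $\eta$ alone, and you also cut off $\xi$ in $\Omega_{\mathrm{low}}$.

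With the shear, the numerator and denominator of each one-step factor sit at frequencies differing by $\delta\xi$, which can be half a Brillouin period. A single factor can then have mean square above one, so you must regroup the product into matched ratios, leaving two boundary factors.

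The genuine gap is your dichotomy: either a positive fraction of steps has $\delta q_2(\eta_l)\ge c$, or $(\xi,\eta)$ is of low type. Since $\delta q_2\le 4\lambda$ with $\lambda=\delta/h_v^2$, for $\lambda<c/4$ no step is ever high. Consider the frequencies with $|\xi|\gtrsim (Th_v)^{-1}$, whose phases $\eta_l h_v$ spread over the whole circle. They are neither low nor Gaussian-damped: each step gives a factor $1-O(\lambda)$, so $\mathbb{E}|X^N|^2$ is of order $e^{-cT/h_v^2}$ on a set of measure of order $(\delta h_v^2)^{-1}$. This is not $\theta^N$ with $\theta$ independent of the mesh. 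After multiplying by the measure, it is not $o(\delta)+o(h_v^2)$ uniformly either.

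No sharper estimate can repair this, because the claimed expansion itself fails in that regime. Take $\sigma=0$ and $U^0=h_x^{-1}h_v^{-1}\delta_{(j_0,k_0)}$. The matrix $(I-\frac{\delta a}{2}D_{vv})^{-1}$ is entrywise nonnegative with diagonal entries at least $(1+a\lambda)^{-1}$, and $S_{-\delta}$ is a shift. So the path that never leaves level $k_0$ already gives
$$U^N_{j_0+Nk_0,k_0}\ge (1+a\lambda)^{-N}/(\delta h_v^2)\approx e^{-aT/h_v^2}/(\delta h_v^2).$$
This blows up as $\delta\to 0$ with $h_v$ fixed, while $u_T(x_0+Tv_0,v_0)$ and the right-hand side of the expansion stay bounded; the underlying reason is that the problem semi-discretised in $v$ is not hypoelliptic. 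The statement therefore needs a restriction such as $\delta\ge c_0 h_v^2$, which is the regime $\delta\sim h_v^2$ used for MLMC. At minimum it needs $\delta\ge h_v^m$ for a fixed $m$, in which case the third regime contributes $O\big((\delta h_v^2)^{-1}e^{-cT/h_v^2}\big)=o(h_v^r)$.

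Under $\lambda\ge c_0$ your counting does work. Use $\min(\delta q_2(\eta_l),c)\ge \min(4c_0,c)\sin^2(\eta_l h_v/2)$ together with $\big|\sum_l\cos(\phi_0+l\omega)\big|\le 1/|\sin(\omega/2)|$, where $\omega=\delta\xi h_v$. These give a uniform $\theta$ whenever $|\xi|\ge 2\pi/(Th_v)$. The remaining frequencies sweep less than one period, and a case distinction on the number of high steps, with the Kolmogorov form $\eta^2T+\xi\eta T^2+\xi^2T^3/3$ used otherwise, gives either $\theta^{2N}$ or Gaussian decay.

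The paper's own treatment of $\lambda<1$ (Lemma \ref{error_big_eta_pt1}) does not close this hole either. It bounds $\mathbb{E}|X^N|^2$ with unsheared multipliers. Its prefactor $h_x^{-2}h_v^{-2}=\delta^{-2}h_v^{-4}$ is not controlled by powers of $h_v$ when $\delta\ll h_v^2$.
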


\begin{corollary}\label{L2_convergence}
    Let us assume the hypothesis of the previous Theorem. Then the discrete $L_2$ in space and $L_2$ in probability of the implicit Milstein finite difference scheme \eqref{Milstein_scheme} is
    \begin{equation}
        \left ( \sum_{i,j} h_x h_v \E \left [ \left | U^N_{i,j} - u(T,x_i,y_j) \right |^2 \right ] \right )^{\frac{1}{2}} = O(h_v^2) + O(\d) + O(h_v^{-1} \d^{-1/2} \theta^N).
    \end{equation}
    If the initial condition is in $L_2$, then
    \begin{equation*}
        \left ( \sum_{i,j} h_x h_v \E \left [ \left | U^N_{i,j} - u(T,x_i,y_j) \right |^2 \right ] \right )^{\frac{1}{2}} = O(h_v^2) + O(\d).
    \end{equation*}
\end{corollary}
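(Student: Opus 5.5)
The corollary follows from Theorem \ref{convergence_thm}: square the pointwise error expansion, take expectations, sum against the discrete measure $h_xh_v$, and then use Parseval to control the $L_2$ norms of the coefficient fields $E_1,E_2,E_3,R$.

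\emph{Step 1 (pointwise second moment).} For each grid node, Theorem \ref{convergence_thm} together with $(\alpha+\beta+\gamma+\rho)^2\le 4(\alpha^2+\beta^2+\gamma^2+\rho^2)$ gives
\begin{equation*}
\E\big[|U^N_{j,k}-u_T(x_j,v_k)|^2\big]\le 4\Big(\d^2\E[E_1^2]+h_v^4\E[E_2^2]+\theta^{2N}h_v^{-4}\d^{-2}\E[E_3^2]+o(\cdot)^2\E[R^2]\Big),
\end{equation*}
where each $E_i$ and $R$ is evaluated at $(T,x_j,v_k)$.

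\emph{Step 2 (summation).} Multiply by $h_xh_v$ and sum over $j,k$. The sums $\sum h_xh_v\E[E_i^2(T,x_j,v_k)]$ must be bounded uniformly in the mesh. To do this I would go back to the Fourier representation used in the proof of Theorem \ref{convergence_thm}. There the error is
\begin{equation*}
\frac{1}{4\pi^2}\iint \big(\hat U^N-\hat u_T\big)e^{i(\cdot)}\,d\xi\,d\eta,
\end{equation*}
and by the discrete Parseval identity
\begin{equation*}
\sum h_xh_v\,\E|U^N-u_T|^2=\frac{1}{4\pi^2}\iint \E\big|X^N-X_T\big|^2\,d\xi\,d\eta,
\end{equation*}
with a harmless extra contribution from $|\xi|>\pi/h_x$ or $|\eta|>\pi/h_v$ for the exact solution, which decays like a Gaussian. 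The $\d$ and $h_v^2$ terms come from a Taylor expansion of the one-step factor $C_n$ against $\exp(\frac12(\sigma^2-a)\cdots+i\sigma\cdots)$. Their coefficients are polynomial in $(\xi,\eta)$ times the Gaussian factor $\E|X_T|^2$, so they are square integrable. This gives the $O(\d)+O(h_v^2)$ terms.

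\emph{Step 3 (the $\theta$ term, the main obstacle).} For high frequencies, e.g. $|\eta|h_v$ bounded away from $0$, stability (the preceding stability theorem, under $\sigma^2<a$ and $2\sigma^4<a^2$) gives
\begin{equation*}
\E|C_n|^2\le\theta^2<1 \quad\text{uniformly},
\end{equation*}
so $\E|X^N|^2\le\theta^{2N}$. The delta initial datum has $\hat U^0\equiv1$, which is not decaying, so this region contributes its area times $\theta^{2N}$. The area of $[-\pi/h_x,\pi/h_x]\times[-\pi/h_v,\pi/h_v]$ is $4\pi^2/(h_xh_v)=4\pi^2/(\d h_v^2)$. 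Taking the square root yields exactly $O(h_v^{-1}\d^{-1/2}\theta^N)$, consistent with squaring the $\theta^N h_v^{-2}\d^{-1}$ term weighted by $h_xh_v=\d h_v^2$.

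The delicate point is obtaining a uniform $\theta$ on the high-frequency region. This requires showing that $\E|C_n|^2$ is bounded away from $1$ once $\d q_2(\eta)$ is bounded below. I would use the explicit ratio
\begin{equation*}
\E|C_n|^2=\frac{1+\d\sigma^2q_1^2+\frac{\d^2\sigma^4}{2}q_1^4}{1+\d aq_2+\frac{\d^2}{4}a^2q_2^2},
\end{equation*}
together with $q_1^2\le q_2$ and the strict inequalities $\sigma^2<a$ and $2\sigma^4<a^2$. The harder part is the large-$|\xi|$, small-$\eta$ region, where smoothing only comes through the shear $\eta+n\d\xi$. Here I would argue that along the shifted frequencies $\eta+n\d\xi$, a fixed fraction of the $N$ steps have $|\eta+n\d\xi|h_v$ bounded below, which again yields geometric decay.

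\emph{Step 4 ($L_2$ initial data).} If $u_0\in L_2$, then $\hat U^0$ is square integrable rather than identically $1$. In that case the high-frequency contribution is bounded by
\begin{equation*}
\theta^{2N}\|\hat U^0\|_{L_2}^2=o(\d^2),
\end{equation*}
since $\theta^N$ decays exponentially in $N=T/\d$. The third term is then absorbed into $O(\d)$, leaving $O(h_v^2)+O(\d)$.
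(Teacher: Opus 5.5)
Your route coincides with the paper's proof. It uses discrete Parseval and the split into $\Omega_{low}$ and $\Omega_{high}$. The low-frequency expansion (Proposition~\ref{error_small_eta}) is integrated against the Gaussian factor to give $O(h_v^4)+O(\delta^2)$. For the Dirac datum, a high-frequency bound is multiplied by the area $4\pi^2/(h_xh_v)=4\pi^2\delta^{-1}h_v^{-2}$. For $L_2$ data, the high-frequency part is exponentially small.

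The step that fails as you wrote it is your re-derivation of the high-frequency bound in Step 3. The bound $\E|C_n|^2\le\theta^2<1$ does \emph{not} hold uniformly on $\{|\eta|h_v\ge c\}$ unless $\lambda=\delta/h_v^2$ is bounded below. From your own ratio, $1-\E|C_n|^2\le a\delta q_2(\eta)+\frac{a^2}{4}\delta^2q_2^2(\eta)\le 4a\lambda+4a^2\lambda^2$. So for $\delta\ll h_v^2$ (e.g.\ $h_v$ fixed and $\delta\to0$), $\E|C_n|^2\to1$ uniformly in $\eta$.

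Your refined criterion ($\delta q_2(\eta)$ bounded below) is correct. With it, however, a band of frequencies is covered by neither of your two arguments. On that band $\delta q_2(\eta)$ is small, yet $|\eta|$ is too large for the linearization $\exp(\sum_n e_n)-1\approx\sum_n e_n$ that produces the $O(\delta)+O(h_v^2)$ terms. Two examples:
\begin{itemize}
\item $|\eta|$ of order $h_v^{-1}$ when $\lambda\to0$, where $\delta q_2\le4\lambda\to0$ but the exponent error is of order $Th_v^{-2}$;
\item when $\lambda\ge1$, the range $\delta^{-1/4}\ll|\eta|\ll\delta^{-1/2}$ inside the paper's $\Omega_1$, where $\delta\eta^2\to0$ but $T\delta\eta^4\to\infty$.
\end{itemize}

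The missing idea is the one used in Lemmas~\ref{error_big_eta_pt1} and~\ref{error_big_eta_pt2}: on that band one does not look for a contraction factor at all. There $\E|C_n|^2\le1-c\,\delta q_2(\eta)$ with $c>0$ depending only on $a$ and $\sigma$, hence $\E|X^N|^2\le\exp(-cTq_2(\eta))$. Since $q_2(\eta)\ge4\eta^2/\pi^2\ge\frac{4}{\pi^2}\min\{h_v^{-4p},\delta^{-2p}\}$, this is pointwise smaller than any power of $h_v$ (resp.\ $\delta$), and the exact $|X_T|^2$ is likewise. A mesh-independent $\theta$ enters only on $\Omega_2$ ($|\eta|\ge\delta^{-1/2}$, $\lambda\ge1$), where $2\lambda a\sin^2(\eta h_v/2)\ge2a\sin^2(1/2)$. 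The paper's proof of the corollary avoids re-deriving any of this by invoking Propositions~\ref{error_small_eta} and~\ref{error_big_eta} directly, and you should do the same.

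Finally, your remark on the shear $\eta+n\delta\xi$ points to a real subtlety that the paper's product formula \eqref{X_N} does not address, since every $C_l$ there is evaluated at the same $\eta$. Your ``fixed fraction of steps'' resolution is only heuristic, though. For $1\ll|\xi|\ll(Th_v)^{-1}$ and small $\eta$, every sheared frequency satisfies $|\eta+n\delta\xi|h_v\ll1$. The decay there has to come from the Gaussian-type factor $\exp\bigl(-c\int_0^T(\eta+t\xi)^2\,dt\bigr)$, not from a geometric rate.
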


In order to prove the convergence Theorem, we first need to define the following sets
\begin{equation*}
\Omega_{low} := \{ (\xi,\eta) \in \R^2 : |\eta| \le \min\{h_v^{-2p},\d^{-p}\} \}
\end{equation*}
and
\begin{equation*}
\Omega_{high} := \Omega_{low}^c \cap [-\pi h_x^{-1},\pi h_x^{-1}] \times [-\pi h_v^{-1},\pi h_v^{-1}],
\end{equation*}
for some $0<p<\frac{1}{4}$.

\begin{proposition}\label{error_small_eta}
    Fix $\d, h_v$ and $h_x := \d h_v$. Then for $(\xi,\eta) \in \Omega_{low}$, we have
    \begin{equation}
        X^N(\xi,\eta + N \d \xi) - X(T,\xi,\eta+T\xi) = X(T,\xi,\eta+T\xi) \left ( h_v^2f_1(\eta) + \d f_2(\xi,\eta) + o(\d,h_x^2,h_v^2)\varphi(T,h_x\xi,h_v\eta) \right ),
    \end{equation}
    where $f_1,f_2,\varphi$ are random variables such that after multiplication by $X(T)$, the integral over $\Omega_{low}$ has bounded first and second moments independent of N.
\end{proposition}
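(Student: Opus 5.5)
We compare the product representation \eqref{X_N} of the numerical symbol with the exact symbol \eqref{X_t}, working with logarithms. By \eqref{X_N}, $X^N(\xi,\eta+N\d\xi)=\prod_{l=0}^{N-1}C_l$. Here each $C_l$ is evaluated at the sheared frequency $\eta_l:=\eta+(N-l)\d\xi$, or equivalently $\eta+l\d\xi$ after reindexing; this tracks the shift $\SS$ exactly as the continuous characteristic $\eta-t\xi$ does. The exact value is
\[
X(T,\xi,\eta+T\xi)=\exp\Big(\sum_{l}\int_{t_l}^{t_{l+1}}\big(-\tfrac{a-\sigma^2}{2}\cdots\big)\Big)
\]
with the same shear. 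Both are written as $\exp(\sum_l \cdot)$: we set $\log C_l=\log(1+i\sqrt\d\sigma Z_lq_1-\tfrac{\d\sigma^2}{2}(Z_l^2-1)q_1^2)-\log(1+\tfrac\d2aq_2)$, where $q_1,q_2$ are evaluated at $\eta_l$. On $\Omega_{low}$ we have $|\eta|\le\min\{h_v^{-2p},\d^{-p}\}$ with $p<1/4$. Hence $\sqrt\d|q_1|\le\sqrt\d|\eta|\le\d^{1/2-p}\to0$ and $h_v|\eta|\le h_v^{1-2p}\to0$. (The shear $l\d\xi$ with $|\xi|\le\pi/h_x$ must be handled too; we use $h_x=\d h_v$ and periodicity of $q_1,q_2$ in $\eta$ with period $2\pi/h_v$, so that the relevant quantity is still $\eta$ modulo the grid.) The logarithms can therefore be Taylor expanded on the event $\{|Z_l|\le \d^{-\epsilon}\}$, whose complement has super-polynomially small probability and is absorbed in the remainder.

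The key steps, in order, are as follows.

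First, we expand in space. From $q_1(\eta)=\eta-\tfrac{h_v^2}{6}\eta^3+O(h_v^4\eta^5)$ and $q_2(\eta)=\eta^2-\tfrac{h_v^2}{12}\eta^4+\dots$, replacing $q_1,q_2$ by $\eta,\eta^2$ in every $C_l$ costs a term $h_v^2\sum_l(\dots)$. Its leading part, $h_v^2 f_1(\eta)$, is a polynomial in $\eta$ times Gaussian sums, for example $h_v^2\eta^3\sigma\sum_l\sqrt\d Z_l$ and $h_v^2\eta^4T$.

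Second, we expand in time with $q_1=\eta$, $q_2=\eta^2$. A Taylor expansion of $\log C_l$ to second order in $\sqrt\d$ gives
\[
\log C_l=i\sqrt\d\sigma\eta_lZ_l+\tfrac{\d}{2}(\sigma^2-a)\eta_l^2+\d^{3/2}(\cdot)+\d^2(\cdot)+\dots
\]
The Milstein correction is exactly what cancels the $\tfrac{\d}{2}\sigma^2\eta^2(Z_l^2-1)$ term coming from $\tfrac12(i\sqrt\d\sigma\eta Z_l)^2$, so no $O(\sqrt\d)$ error survives. We then compare $\sum_l i\sqrt\d\sigma\eta_lZ_l$ with $i\sigma\int(\eta-t\xi)\,dW_t$, and the Riemann sum $\sum_l\d\eta_l^2$ with the integral in \eqref{X_t}. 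Both differences are $O(\d)$, with explicit polynomial-in-$(\xi,\eta)$ coefficients (the left-point versus integral error of a linear integrand in the It\^o integral, and the trapezoid-type error of the quadratic). These, together with the surviving $\d^{3/2}$ terms (mean zero, summing to $O(\d)$ in $L^2$) and the $\d^2$ terms (summing to $O(\d)$), define $\d f_2(\xi,\eta)$. Finally $e^{E}-1=E+O(E^2)$ yields the multiplicative form $X(T)(h_v^2f_1+\d f_2+o(\cdot)\varphi)$.

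Third, we verify the moment claim. Each of $f_1,f_2,\varphi$ is a polynomial in $(\xi,\eta)$ of bounded degree with Gaussian coefficients, while $\E|X(T,\xi,\eta+T\xi)|^2=\exp(-(a-\sigma^2)(\eta^2T+\eta\xi T^2+\xi^2T^3/3))$. The quadratic form in the exponent is positive definite, because its discriminant is $T^4-\tfrac43T^4<0$. Hence $\int|X|\,|f_i|\,d\xi\,d\eta$ has moments bounded uniformly in $N$, by Cauchy--Schwarz and Gaussian hypercontractivity.

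The main obstacle is controlling the remainder uniformly. The Taylor remainders involve powers like $\d^{3/2}|\eta|^3\cdot N$ and $h_v^4|\eta|^6$. These are $o(\d,h_v^2)$ only because $|\eta|\le\d^{-p}$ and $|\eta|\le h_v^{-2p}$ with $p<1/4$, but the polynomial growth in $\eta$ must still be beaten by the Gaussian decay of $X(T)$ after integration. I would first bound the remainder pathwise on the truncated event $\{\max_l|Z_l|\le\d^{-\epsilon}\}$ by $(\d^{1/2-3p-\epsilon}+h_v^{2-8p})\cdot\mathrm{poly}(|\xi|,|\eta|)$, choosing $\epsilon$ small. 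Off that event I would use the stability estimate \eqref{suff_cond_stability}, namely $\E|C_l|^2\le1$, to bound $|X^N|$ in $L^2$.
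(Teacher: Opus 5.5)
Your strategy matches the paper's: compare $\log C_l$ with the exact log-increment, use the Milstein cancellation of the $Z_l^2$ terms, expand $q_1,q_2$ in $h_v$, and linearize $e^{E}-1$. You are also right, and more careful than the paper, whose product \eqref{X_N} keeps $q_1(\eta),q_2(\eta)$ in every factor: since $\mathcal F(\SS V)(\xi,\eta)=\hat V(\xi,\eta+\d\xi)$, each $C_l$ must be evaluated at a sheared frequency $\eta_l$. But your handling of that shear fails. $\Omega_{low}$ restricts only $|\eta|$, while $|\xi|\le\pi/h_x=\pi/(\d h_v)$. So a single shear step $\d\xi$ can lie anywhere in $[-\pi/h_v,\pi/h_v]$, which is a full period of $q_1,q_2$. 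Periodicity reduces $\eta_l$ modulo $2\pi/h_v$, not to $\eta$. For example, if $\xi h_x=\pi/2$ then $\eta_lh_v$ cycles through $\eta h_v+\{0,\pi/2,\pi,3\pi/2\}$, and at $\eta h_v+\pi/2$ one has $h_vq_1\approx1$ and $h_v^2q_2\approx2$. At such factors the following problems arise.
\begin{itemize}
\item The expansion of $q_1(\eta_l)$ in powers of $h_v\eta_l$ is meaningless.
\item $\sqrt\d\,|q_1(\eta_l)|\approx\sqrt\l$ need not be small, so $\log C_l$ cannot be Taylor expanded.
\item There is no exact-side counterpart to compare with, because $X(T,\xi,\eta+T\xi)$ involves the unreduced frequencies $\eta+s\xi$, of size up to $T|\xi|$.
\end{itemize}
Your three steps therefore only cover $\Omega_{low}\cap\{|\xi|\le c\min\{h_v^{-2p},\d^{-p}\}\}$.

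On the rest of $\Omega_{low}$, a set of area of order $h_x^{-1}\min\{h_v^{-2p},\d^{-p}\}$, you must show directly that both $X(T)$ and $X^N$ are negligible after integration. For $X(T)$, your positive-definite quadratic form suffices. For $X^N$, the only tool in your proposal is $\E|C_l|^2\le1$. It gives $\E|X^N|^2\le1$, which integrates to $O(h_x^{-1})$ and is useless. The missing ingredient is the quantitative per-factor contraction behind \eqref{X_N_mean_square_estimate}, applied at each factor's own frequency. By independence, $\E|X^N(\xi,\eta+N\d\xi)|^2=\prod_l\E|C(\eta_l)|^2\le\exp\bigl(-\tfrac{4\b}{9}\sum_l\min\{\l d_l,1\}\bigr)$, with $d_l=a\sin^2(\eta_lh_v/2)$. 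You then need a counting argument along the orbit $\eta_lh_v$ mod $2\pi$, showing that this sum grows like a negative power of $h_v$ or $\d$ once $|\xi|\ge c\min\{h_v^{-2p},\d^{-p}\}$.
\begin{itemize}
\item If the orbit winds around the circle (its steps have length at most $\pi$), a fixed fraction of the angles satisfy $\sin^2(\eta_lh_v/2)\ge\tfrac12$.
\item Otherwise, $\sin^2(\theta/2)\ge\theta^2/\pi^2$ lets you bound the sum below in terms of $\int_0^T(\eta+s\xi)^2ds\ge c\,\xi^2$.
\end{itemize}
You cannot borrow this from the paper's $\Omega_{high}$ lemmas, since they too freeze $\eta$ in every $C_n$.
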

\proof
From \eqref{X_t}, we have
\begin{equation*}
    X_{t_{n+1}}(\xi,\eta + t_{n+1} \xi) = X_{t_n}(\xi,\eta + t_{n+1} \xi) \exp \left ( \frac{1}{2} (\sigma^2-a) \left ( \eta^2 \d + \xi \eta \d^2 + \frac{\xi^2}{3} \d^3 \right ) + i \sigma \eta \sqrt{\d} Z_n  - i \sigma \xi \d W_{t_n} \right ).
\end{equation*}
Now, if $X_n$ is the numerical approximation of $X_{\d n}(\xi,\eta + (n+1) \d \xi)$ we have
\begin{equation*}
    X_{n+1} = C_n X_n,
\end{equation*}
where
\begin{equation*}
    C_n = \exp \left ( \frac{1}{2} (\sigma^2-a) \left ( \eta^2 \d + \xi \eta \d^2 + \frac{\xi^2}{3} \d^3 \right ) + i \sigma \eta \sqrt{\d} Z_n  - i \sigma \xi \d W_{t_n} + e_n \right ),
\end{equation*}
and $e_n$ is the logarithmic error between the numerical solution and the exact solution introduced during $[n\d,(n+1)\d]$. Aggregating over $N$ time steps, at $t_N = \d N = T$,
\begin{equation*}
    X^N(\xi,\eta + N \d \xi) = X(T,\xi,\eta+T\xi) \exp{\left ( \sum_{n=0}^{N-1}{e_n} \right ) },
\end{equation*}
where $X(T)$ is the exact solution at time $T$.

Thus, we have
\begin{equation*}
    e_n = \log{C_n} - \frac{1}{2} (\sigma^2-a) \left ( \eta^2 \d + \xi \eta \d^2 + \frac{\xi^2}{3} \d^3 \right ) - i \sigma \eta \sqrt{\d} Z_n  + i \sigma \xi \d W_{t_n},
\end{equation*}
and
\begin{equation*}
    \sum_{n=0}^{N-1}{e_n} = \sum_{n=0}^{N-1}{\log{C_n}} - \frac{1}{2} (\sigma^2-a) \left ( \eta^2 + \xi \eta \d + \frac{\xi^2}{3} \d^2 \right ) T - i \sigma \eta \sqrt{\d} \sum_{n=0}^{N-1}{Z_n} + i \sigma \xi \d \sum_{n=0}^{N-1}{W_{t_n}}.
\end{equation*}

By \eqref{C_n}, we have
\begin{equation*}
    C_n = \frac{1 + i \sqrt{\d} \sigma Z_n q_1(\eta) - \d \frac{\sigma^2}{2} \left ( Z_n^2 - 1 \right ) q_1^2(\eta)}{1 + \frac{\d}{2} a q_2(\eta)},
\end{equation*}

By Taylor expansion
\begin{align*}
    \log{C_n} &= \log{\left ( 1 + i \sqrt{\d} \sigma Z_n q_1(\eta) - \d \frac{\sigma^2}{2} \left ( Z_n^2 - 1 \right ) q_1^2(\eta) \right )} - \log{\left (1 + \frac{\d}{2} a q_2(\eta) \right )} \\
    &= i \sqrt{\d} \sigma q_1(\eta) Z_n - \d \frac{\sigma^2}{2} q_1^2(\eta) Z_n^2 + \d \frac{\sigma^2}{2} q_1^2(\eta) + \d \frac{\sigma^2}{2} q_1^2(\eta) Z_n^2 - \frac{\d}{2} a q_2(\eta) \\ 
    &+ O \left ( \d \sqrt{\d} \right ) i \phi_1(Z_n) + O \left ( \d^2 \right ) \phi_2(Z_n) \\
    &= i \sqrt{\d} \sigma q_1(\eta) Z_n + \frac{\d}{2} \left ( \sigma^2 q_1^2(\eta) - a q_2(\eta) \right ) + O \left ( \d \sqrt{\d} \right ) i \phi_1(Z_n) + O \left ( \d^2 \right ) \phi_2(Z_n).
\end{align*}
Thus
\begin{align*}
    \sum_{n=0}^{N-1}{e_n} &= \sum_{n=0}^{N-1}{\log{C_n}} - \frac{1}{2} (\sigma^2-a) \left ( \eta^2 + \xi \eta \d + \frac{\xi^2}{3} \d^2 \right ) T - i \sigma \eta \sqrt{\d} \sum_{n=0}^{N-1}{Z_n} + i \sigma \xi \d \sum_{n=0}^{N-1}{W_{t_n}} \\
    &= i \left ( \sqrt{\d} \sigma (q_1(\eta) - \eta) \sum_{n=0}^{N-1}{Z_n} \right ) + \frac{1}{2} (a - \sigma^2) \left ( \eta^2 + \xi \eta \d + \frac{\xi^2}{3} \d^2 \right ) T + \frac{1}{2} \left ( \sigma^2 q_1^2(\eta) - a q_2(\eta) \right ) T \\
    &+ O \left ( \d \sqrt{\d} i \sum_{n=0}^{N-1}{\phi_1(Z_n)} \right ) + O \left ( \d^2 \sum_{n=0}^{N-1}{\phi_2(Z_n)} \right ),
\end{align*}
since $\sum_{n=0}^{N-1}{W_{t_n}} = \sqrt{\d} \sum_{n=0}^{N-1}{\sum_{l=0}^n{Z_l}}$.

Now
\begin{align*}
    &\exp{\left ( \sum_{n=0}^{N-1}{e_n} \right )} = \exp{\left ( \frac{1}{2} (a - \sigma^2) \left ( \eta^2 + \xi \eta \d + \frac{\xi^2}{3} \d^2 \right ) T + \frac{1}{2} \left ( \sigma^2 q_1^2(\eta) - a q_2(\eta) \right ) T \right )} \\
    &\cdot \exp \left ( i \left ( \sqrt{\d} \sigma (q_1(\eta) - \eta) \sum_{n=0}^{N-1}{Z_n} \right ) + O \left ( \d \sqrt{\d} i \sum_{n=0}^{N-1}{\phi_1(Z_n)} \right ) + O \left ( \d^2 \sum_{n=0}^{N-1}{\phi_2(Z_n)} \right ) \right ).
\end{align*}
We have
\begin{align*}
    q_1(\eta) & = \frac{\sin(\eta h_v)}{h_v} = \eta - \frac{1}{6} \eta^3 h_v^2 + O(\eta^5 h_v^4); \\
    q_2(\eta) & = 4 \frac{\sin^2 \left ( \frac{\eta h_v}{2} \right)}{h_v^2} = \eta^2 - \frac{1}{12} \eta^4 h_v^2 + O(\eta^6 h_v^4); \\
    q_1^2(\eta) & = \frac{\sin^2(\eta h_v)}{h_v^2} = \eta^2 - \frac{1}{3} \eta^4 h_v^2 + O(\eta^6 h_v^4).
\end{align*}
Thus
\begin{align*}
    &\exp{\left ( \frac{1}{2} (a - \sigma^2) \left ( \eta^2 + \xi \eta \d + \frac{\xi^2}{3} \d^2 \right ) T + \frac{1}{2} \left ( \sigma^2 q_1^2(\eta) - a q_2(\eta) \right ) T \right )} \\
    &= \exp{\left ( - \frac{1}{6} \left ( \sigma^2 - \frac{a}{4} \right ) \eta^4 h_v^2 T + \frac{1}{2} (a - \sigma^2) \left ( \xi \eta \d + \frac{\xi^2}{3} \d^2 \right ) T + O(\eta^6 h_v^4) \right )} \\
    &= 1 - \frac{1}{6} \left ( \sigma^2 - \frac{a}{4} \right ) \eta^4 h_v^2 T + \frac{1}{2} (a - \sigma^2) \left ( \xi \eta \d + \frac{\xi^2}{3} \d^2 \right ) T + O(\d h_v^2,\d^2, h_v^4), \\
\end{align*}
and
\begin{align*}
    &\exp \left ( i \left ( \sqrt{\d} \sigma (q_1(\eta) - \eta) \sum_{n=0}^{N-1}{Z_n} \right ) + O \left ( \d \sqrt{\d} i \sum_{n=0}^{N-1}{\phi_1(Z_n)} \right ) + O \left (  \d^2 \sum_{n=0}^{N-1}{\phi_2(Z_n)} \right ) \right ) \\
    &= 1 - i \frac{1}{6} \sqrt{\d} \sigma \eta^3 h_v^2 \sum_{n=0}^{N-1}{Z_n} + O \left ( \d \sqrt{\d} i \sum_{n=0}^{N-1}{\phi_1(Z_n)} \right ) + O \left ( \d^2 \sum_{n=0}^{N-1}{\phi_2(Z_n)} \right ).
\end{align*}
Hence
\begin{align*}
    &X^N(\xi,\eta + N \d \xi) - X(T,\xi,\eta+T\xi) = X(T,\xi,\eta+T\xi) \left ( \exp{\left ( \sum_{n=0}^{N-1} e_n \right )} - 1 \right ) \\
    &= X(T,\xi,\eta+T\xi) \Bigg ( - i \frac{1}{6} \sigma \eta^3 h_v^2 W_T - \frac{1}{6} \left ( \sigma^2 - \frac{a}{4} \right ) \eta^4 h_v^2 T  + \frac{1}{2} (a - \sigma^2) \xi \eta \d T\\
    &+ O \left ( \d \sqrt{\d} i \sum_{n=0}^{N-1}{\phi_1(Z_n)} \right ) + O \left ( \d^2 \sum_{n=0}^{N-1}{\phi_2(Z_n)} \right ) + o(\d, h_v^2) \varphi(T, h_x \xi, h_v \eta) \Bigg ).
\end{align*}

\endproof

In the case $(\xi, \eta) \in \Omega_{high}$, we have the following result.
\begin{proposition}\label{error_big_eta}
    Let $T > 0, \d = \frac{T}{N}, h_v > 0, h_x = \d h_v$ be the mesh sizes and assume that $\sigma^2 < a$ and $2\sigma^4 < a^2$. Then there exist $C > 0$ and $\theta \in (0,1)$ independent of $h_v$ and $\d$, such that, for $(\xi,\eta) \in \Omega_{high}$, we have
    \begin{equation}
        \sqrt{\E \left [ \left | \iint_{\Omega_{high}} X^N(\xi,\eta + N \d \xi) - X(T,\xi,\eta+T\xi) d\xi d\eta \right |^2 \right ]} \leq C h_v^{-2} \d^{-1} \theta^{N}.
    \end{equation}
\end{proposition}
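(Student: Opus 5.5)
We bound the numerical and exact contributions on $\Omega_{high}$ separately, using the triangle inequality in $L^2(\Omega)$ together with Minkowski's integral inequality:
\begin{equation*}
\sqrt{\E\Big[\Big|\iint_{\Omega_{high}} (X^N - X(T))\, d\xi d\eta\Big|^2\Big]} \le \iint_{\Omega_{high}} \sqrt{\E[|X^N|^2]}\, d\xi d\eta + \iint_{\Omega_{high}} \sqrt{\E[|X(T)|^2]}\, d\xi d\eta .
\end{equation*}
The measure of $\Omega_{high}$ is at most $(2\pi h_x^{-1})(2\pi h_v^{-1}) = 4\pi^2 \d^{-1} h_v^{-2}$, since $h_x=\d h_v$. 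This is exactly the prefactor $h_v^{-2}\d^{-1}$ in the statement. It therefore suffices to show that both integrands are bounded by $C\theta^N$ uniformly on $\Omega_{high}$.

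\emph{Numerical part.} By \eqref{X_N} and the independence of the $C_l$, we have $\E[|X^N|^2] = \prod_{l=0}^{N-1}\E[|C_l|^2] = \rho(\eta)^N$. The stability computation gives
\begin{equation*}
\rho(\eta) = \frac{1 + \d\sigma^2 q_1^2 + \frac{\d^2}{2}\sigma^4 q_1^4}{1 + \d a q_2 + \frac{\d^2}{4}a^2 q_2^2}.
\end{equation*}
The task is to bound $\rho$ by some $\theta^2<1$ that does not depend on $h_v$ or $\d$. Write $q_1^2 = q_2\cos^2(\eta h_v/2) \le q_2$ and set $y=\d q_2$. Then $\rho \le (1+\sigma^2 y + \frac{\sigma^4}{2}y^2)/(1+ay+\frac{a^2}{4}y^2)$. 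This is a ratio of quadratics in $y\ge0$. Because $\sigma^2<a$ and $\sigma^4/2<a^2/4$, it is strictly less than $1$ for $y>0$, decreasing near $0$, and tending to $2\sigma^4/a^2<1$ as $y\to\infty$. Hence $\rho \le g(y_0):=\sup_{y\ge y_0}(\cdots)<1$ for any $y_0>0$.

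On $\Omega_{high}$ we have $|\eta|>\min\{h_v^{-2p},\d^{-p}\}$. Using $\sin(s)\ge 2s/\pi$ on $[0,\pi/2]$, we get $q_2(\eta)\ge 4\eta^2/\pi^2$ for $|\eta|\le\pi/h_v$. Therefore $y=\d q_2 \ge \frac{4}{\pi^2}\d\min\{h_v^{-4p},\d^{-2p}\}$.

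\textbf{Main obstacle.} This lower bound on $y$ is not bounded below by a fixed constant when $\d\to0$ faster than $h_v^{4p}$: since $p<1/4$, $\d\cdot\d^{-2p}\to 0$. So a uniform $\theta$ cannot come from $\rho$ alone. I would instead exploit the $N$-th power. For small $y$ we have $\rho\le 1-cy$ with $c>0$ depending only on $a,\sigma$. This gives
\begin{equation*}
\rho^N \le e^{-cNy} = e^{-cT q_2} \le \exp\big(-c' T \min\{h_v^{-4p},\d^{-2p}\}\big).
\end{equation*}
The right-hand side decays faster than any power of $h_v$ and $\d$. For large $y$ the fixed bound $g(y_0)^N$ applies. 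I would then define $\theta$ through $\theta^N := \max\{g(y_0)^{N/2}, e^{-c' T\min\{\cdot\}/2}\}$. If a genuinely constant $\theta$ is required, I would restrict to the regime $\d\lesssim h_v^{2}$, or adjust $p$, so that $y$ stays bounded below, and record this coupling explicitly. I expect this reconciliation to be the delicate point of the proof.

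\emph{Exact part.} From \eqref{X_t}, $\E[|X(T,\xi,\eta+T\xi)|^2] = \exp\big((\sigma^2-a)(\eta^2T+\frac{\xi^2}{12}T^3)\big)$. Here I complete the square in the quadratic form $\eta^2 - \xi\eta T + \frac{\xi^2}{3}T^2$ after the shift. This is bounded by $\exp(-(a-\sigma^2)\kappa T\eta^2)$ for some $\kappa>0$ that depends on $T$ only. On $\Omega_{high}$ it is therefore dominated by the same super-polynomially small quantity as the numerical part, and hence by $\theta^N$ after enlarging $C$. Integrating both bounds over $\Omega_{high}$ gives $C h_v^{-2}\d^{-1}\theta^N$.
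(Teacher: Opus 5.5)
Your set-up matches the paper's. Your bound on $\rho(\eta)$ via $q_1^2=q_2\cos^2(\eta h_v/2)\le q_2$ is the paper's mean-square lemma in other variables, with $y=\delta q_2=4\lambda d/a$. Minkowski's integral inequality is a cleaner substitute for the paper's Cauchy--Schwarz step and gives the prefactor $|\Omega_{high}|\le 4\pi^2h_v^{-2}\delta^{-1}$ directly.

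\textbf{The gap.} It is the final step, where you absorb the super-polynomial bound $e^{-c\eta_*^2}$, with $\eta_*:=\min\{h_v^{-2p},\delta^{-p}\}$, into $C\theta^N$ ``after enlarging $C$''. This cannot be done for a mesh-independent $\theta$. Since $\eta_*^2\le\delta^{-2p}=(N/T)^{2p}$ with $2p<\tfrac12$, we get $e^{-c\eta_*^2}/\theta^N\to\infty$ as $\delta\to0$ for every fixed $\theta\in(0,1)$.

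This is not an artefact of lossy estimates. For $|\eta|$ near $\eta_*$ and $\xi=0$ one has $\E|X(T,\xi,\eta+T\xi)|^2=e^{-(a-\sigma^2)T\eta^2}$ exactly. Since $\delta q_2\to0$ and $\eta h_v\to0$ there, also $\rho(\eta)^N=e^{-(a-\sigma^2)T\eta^2(1+o(1))}$. So both integrands you feed into Minkowski really have size $e^{-cT\eta_*^2}\gg\theta^N$ on a set of non-negligible measure, and no pointwise mean-square route can produce $\theta^N$.

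Your proposed repairs do not close this. Defining $\theta^N$ as a maximum makes $\theta$ depend on $(h_v,\delta)$. Restricting to $\delta\lesssim h_v^2$ goes the wrong way: there $\eta_*=h_v^{-2p}$ and $y\gtrsim\delta h_v^{-4p}\le h_v^{2-4p}\to0$. A uniform contraction $\rho\le\theta_0<1$ exists only where $\delta q_2\gtrsim1$, i.e.\ $|\eta|\gtrsim\delta^{-1/2}$. You cannot ``adjust $p$'' to reach that band, because $p<\tfrac14$ is forced by the low-frequency expansion in Proposition~\ref{error_small_eta}.

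\textbf{The paper's proof.} It hits the same wall and does not resolve it. For $\lambda=\delta/h_v^2<1$ it proves only an $o(h_v^r)$ bound. For $\lambda\ge1$ it splits $\Omega_{high}$ at $|\eta|=\delta^{-1/2}$. On the outer band $\Omega_2$, $\lambda d$ is bounded below, which gives a genuine $\theta_0$. On $\Omega_1$ it obtains only $e^{-c\delta^{-2p}}$-type bounds, for both $X^N$ and $X(T)$. It then asserts these are of higher order than $h_v^{-4}\delta^{-2}\theta_0^N$, which is the same invalid comparison as yours: they are in fact larger, since $\delta^{-2p}=o(\delta^{-1})$.

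\textbf{What is actually proved.} Your estimates, and the paper's, establish
\begin{equation*}
\sqrt{\E\Bigl[\Bigl|\iint_{\Omega_{high}}\bigl(X^N-X(T)\bigr)\,d\xi\,d\eta\Bigr|^2\Bigr]}\le C h_v^{-2}\delta^{-1}\bigl(\theta^N+e^{-c\eta_*^2}\bigr).
\end{equation*}
This is super-polynomially small when $h_v,\delta\to0$ polynomially related to each other. You should state and prove this version rather than the pure $\theta^N$ form.

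A minor point: after the shift, the exact second moment is $\exp\bigl((\sigma^2-a)T((\eta+\xi T/2)^2+\xi^2T^2/12)\bigr)$, not $\exp\bigl((\sigma^2-a)(\eta^2T+\xi^2T^3/12)\bigr)$. Your lower bound with $\kappa=1/4$ is still correct.
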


We split the proof of the previous Proposition into steps.

First of all, let us calculate a useful bound for $\E \left [ |X_N(\xi,\eta)|^2 \right ]$.

\begin{lemma}
    For $(\xi, \eta) \in \Omega_{high}$,
    \begin{equation}\label{X_N_mean_square_estimate}
        \E \left [ |X^N(\xi,\eta + N \d \xi)|^2 \right ] \leq \left ( 1 - 4 \beta \frac{\l a \sin^2 \left ( \frac{\eta h_v}{2} \right ) + \l^2 a^2 \sin^4 \left ( \frac{\eta h_v}{2} \right )}{\left ( 1 + 2 \l a \sin^2 \left ( \frac{\eta h_v}{2} \right )  \right )^2} \right )^N,
    \end{equation}
    where $\l := \frac{\d}{h_v^2}$ and
    \begin{equation*}
        \beta := \min \left \{ 1 - \frac{\sigma^2}{a}, 1 - \frac{2 \sigma^4}{a^2} \right \} \in (0,1).
    \end{equation*}
\end{lemma}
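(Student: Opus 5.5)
We reuse the product representation \eqref{X_N} and independence of the factors $C_l$ from the stability proof. Since $X^0\equiv 1$, we have $X^N(\xi,\eta+N\d\xi)=\prod_{l=0}^{N-1}C_l$. The $Z_l$ are i.i.d., and $C_l$ depends only on $Z_l$ and on the fixed frequency $\eta$ (not on $\xi$ or $n$). Hence
\begin{equation*}
\E\left[|X^N(\xi,\eta+N\d\xi)|^2\right]=\prod_{l=0}^{N-1}\E[|C_l|^2]=\left(\E[|C_0|^2]\right)^N .
\end{equation*}
It therefore suffices to prove the one-step bound
\begin{equation*}
\E[|C_0|^2]\le 1-4\beta\,\frac{\l a s^2+\l^2a^2s^4}{(1+2\l a s^2)^2},\qquad s:=\sin\left(\tfrac{\eta h_v}{2}\right),
\end{equation*}
and then raise it to the $N$-th power. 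The restriction to $\Omega_{high}$ plays no role here; the bound holds for every $\eta$.

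For the one-step bound, we take the moment computation from the stability proof as given:
\begin{equation*}
\E[|C_0|^2]=\frac{1+\d\sigma^2q_1^2+\frac{\d^2}{2}\sigma^4q_1^4}{\left(1+\frac{\d}{2}aq_2\right)^2}.
\end{equation*}
We rewrite this in terms of $\l=\d/h_v^2$ and $s$. Since $\frac{\d}{2}aq_2=2\l a s^2$, the denominator is $(1+2\l as^2)^2=1+4\l as^2+4\l^2a^2s^4$. For the numerator we use $q_1^2=\sin^2(\eta h_v)/h_v^2=4s^2(1-s^2)/h_v^2\le 4s^2/h_v^2=q_2$. This gives $\d\sigma^2q_1^2\le 4\l\sigma^2s^2$ and $\frac{\d^2}{2}\sigma^4q_1^4\le 8\l^2\sigma^4s^4$.

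Next we bring in $\beta$. By definition, $\sigma^2\le(1-\beta)a$ and $2\sigma^4\le(1-\beta)a^2$. Hence $4\l\sigma^2s^2\le(1-\beta)\,4\l as^2$ and $8\l^2\sigma^4s^4\le(1-\beta)\,4\l^2a^2s^4$. The numerator is therefore at most $1+(1-\beta)(4\l as^2+4\l^2a^2s^4)$, which equals the denominator minus $\beta(4\l as^2+4\l^2a^2s^4)$. Dividing by the denominator gives exactly the claimed factor $1-4\beta(\l as^2+\l^2a^2s^4)/(1+2\l as^2)^2$. This factor lies in $[0,1]$, since it is a ratio of nonnegative quantities bounded by $1$, so taking the $N$-th power is legitimate. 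Finally, $\beta\in(0,1)$ holds under the hypotheses $\sigma^2<a$ and $2\sigma^4<a^2$. Note that $\beta<1$ requires $\sigma>0$; when $\sigma=0$ we have $\beta=1$, which is harmless, and the stated range should be read as $(0,1]$.

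The main obstacle is the careful matching of constants, in particular the factor $\frac{1}{2}$ in $\frac{\d^2}{2}\sigma^4q_1^4$ against the $4\l^2a^2s^4$ term. This constant comes from $\E[(Z^2-1)^2]=2$, and it is exactly why the condition $2\sigma^4<a^2$ appears. Before the comparison step, I would re-verify this expansion of $\E|1+i\sqrt{\d}\sigma Zq_1-\frac{\d}{2}\sigma^2(Z^2-1)q_1^2|^2$, checking that the cross terms with odd powers of $Z$ vanish. The remaining step, the inequality $q_1^2\le q_2$, is elementary via $\sin^2(2y)=4\sin^2y\cos^2y$.
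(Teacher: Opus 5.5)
Your proof is correct and is essentially the paper's argument. You factor $\E[|X^N(\xi,\eta+N\d\xi)|^2]$ into $N$ identical one-step moments via \eqref{X_N} and independence, then bound $\E[|C_n|^2]$ using $q_1^2\le q_2$ (the paper's $\cos^2(\eta h_v/2)\le 1$) together with $\sigma^2\le(1-\beta)a$ and $2\sigma^4\le(1-\beta)a^2$; your remarks that the restriction to $\Omega_{high}$ is never used and that $\beta=1$ when $\sigma=0$ (so the range should read $(0,1]$) are accurate refinements of the statement.
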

\proof
By \eqref{X_N} we have that $X_N = X_0 \prod_{n=0}^{N-1} C_n$, where
\begin{equation*}
    C_n = \frac{1 + i \sqrt{\d} \sigma Z_n q_1(\eta) - \d \frac{\sigma^2}{2} \left ( Z_n^2 - 1 \right ) q_1^2(\eta)}{1 + \frac{\d}{2} a q_2(\eta)}.
\end{equation*}
Now, we have
\begin{align*}
    \E \left [ |C_n|^2 \right ] &= 1 - \frac{\d \left ( a q_2(\eta) - \sigma^2 q_1^2(\eta) \right ) + \d^2 \left ( \frac{a^2}{4} q_2^2(\eta) - \frac{\sigma^4}{2} q_1^4(\eta) \right )}{1 + \d a q_2(\eta) + \frac{\d^2}{4} a^2 q_2^2(\eta)} \\
    &= 1 - \frac{\l \left ( 4 a \sin^2 \left ( \frac{\eta h_v}{2} \right ) - \sigma^2 \sin^2 (\eta h_v) \right ) + \l^2 \left ( 4 a^2 \sin^4 \left ( \frac{\eta h_v}{2} \right ) - \frac{\sigma^4}{2} \sin^4 (\eta h_v) \right )}{1 + \l 4 a \sin^2 \left ( \frac{\eta h_v}{2} \right ) + \l^2 4 a^2 \sin^4 \left ( \frac{\eta h_v}{2} \right )} \\
    &= 1 - \frac{4 \l \sin^2 \left ( \frac{\eta h_v}{2} \right ) \left ( a - \sigma^2 \cos^2 \left ( \frac{\eta h_v}{2} \right ) \right ) + 4 \l^2 \sin^4 \left ( \frac{\eta h_v}{2} \right ) \left ( a^2 - 2 \sigma^4 \cos^4 \left ( \frac{\eta h_v}{2} \right ) \right )}{1 + \l 4 a \sin^2 \left ( \frac{\eta h_v}{2} \right ) + \l^2 4 a^2 \sin^4 \left ( \frac{\eta h_v}{2} \right )} \\
    &\leq 1 - \frac{4 \l a \sin^2 \left ( \frac{\eta h_v}{2} \right ) \left ( 1 - \frac{\sigma^2}{a} \right ) + 4 \l^2 a^2 \sin^4 \left ( \frac{\eta h_v}{2} \right ) \left ( 1 - \frac{2 \sigma^4}{a^2} \right )}{1 + \l 4 a \sin^2 \left ( \frac{\eta h_v}{2} \right ) + \l^2 4 a^2 \sin^4 \left ( \frac{\eta h_v}{2} \right )}.
\end{align*}
Let us define $d := a \sin^2 \left ( \frac{\eta h_v}{2} \right )$ and $\b := \min \left \{ 1 - \frac{\sigma^2}{a}, 1 - \frac{2 \sigma^4}{a^2} \right \} \in (0,1)$, since by hypothesis $\sigma^2 < a$ and $2 \sigma^4 < a^2$. We get
\begin{align*}
    \E \left [ |C_n|^2 \right ] \leq 1 - \frac{4 \b (\l d + \l^2 d^2)}{(1 + 2 \l d)^2},
\end{align*}
and thus \eqref{X_N_mean_square_estimate} holds.
\endproof

Now, we consider the case $0 < \l < 1$, and thus
\begin{equation*}
\Omega_{high} = \{ (\xi,\eta) \in \R^2 : |\eta| > h_v^{-2p} \} \cap [-\pi h_x^{-1},\pi h_x^{-1}] \times [-\pi h_v^{-1},\pi h_v^{-1}],
\end{equation*}
in particular $|\Omega_{high}| = 4 \pi h_x^{-1}(\pi h_v^{-1} - h_v^{-2p})$.

\begin{lemma}\label{error_big_eta_pt1}
    For $0 < \l < 1$, (i.e. $\d \leq h_v^2$),
    \begin{equation}
        \E \left [ \left | \iint_{\Omega_{high}} X^N(\xi,\eta + N \d \xi) - X(T,\xi,\eta+T\xi) d\xi d\eta \right |^2 \right ] = o(h_v^r), \quad \forall r > 0.
    \end{equation}
\end{lemma}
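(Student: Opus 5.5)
We bound the two pieces separately: by $|A-B|^2 \le 2|A|^2+2|B|^2$ and Cauchy--Schwarz over $\Omega_{high}$,
\begin{equation*}
\E\left[\left|\iint_{\Omega_{high}} (X^N - X(T))\,d\xi d\eta\right|^2\right] \le 2|\Omega_{high}| \iint_{\Omega_{high}} \left(\E[|X^N|^2] + \E[|X(T)|^2]\right) d\xi d\eta .
\end{equation*}
This reduces the claim to pointwise second-moment bounds on $\Omega_{high}$. The prefactor $|\Omega_{high}| \le 4\pi^2 h_x^{-1}h_v^{-1} = 4\pi^2 \d^{-1}h_v^{-2}$ is only polynomial in $h_v^{-1}$ and $\d^{-1}$. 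We will show that both integrands are exponentially small in a negative power of $h_v$, so that polynomial losses are absorbed into $o(h_v^r)$.

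For the exact solution, \eqref{X_t} gives $\E[|X(T,\xi,\eta+T\xi)|^2]=\exp\big(-(a-\sigma^2)Q\big)$. Here $Q=\tilde\eta^2T-\xi\tilde\eta T^2+\xi^2T^3/3$ with $\tilde\eta=\eta+T\xi$, and the imaginary part contributes modulus one. Rewriting in terms of $\eta$, $Q=\int_0^T(\eta+(T-s)\xi)^2ds$. Minimizing the quadratic in $\xi$ for fixed $\eta$ gives $Q\ge c\,\eta^2 T$ with an explicit $c>0$ (one finds $c=1/4$). Since $|\eta|>h_v^{-2p}$ on $\Omega_{high}$, the bound is $\le \exp(-c(a-\sigma^2)T h_v^{-4p})$. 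Integrated over a set of measure $O(\d^{-1}h_v^{-2})$ and multiplied by $|\Omega_{high}|$, this is $O(\d^{-2}h_v^{-4}e^{-c'h_v^{-4p}})$.

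For the numerical solution we use Lemma \eqref{X_N_mean_square_estimate}, writing $d=a\sin^2(\eta h_v/2)$. Because $\l<1$, $(1+2\l d)^2\le (1+2a)^2$, so the one-step factor is $\le 1-c_1\b\l d$. On $\Omega_{high}$ we have $h_v^{1-2p}\le|\eta h_v|\le\pi$, so $\sin^2(\eta h_v/2)\ge c_2\eta^2h_v^2$ and $\l d\ge c_3\d\eta^2$. Then
\begin{equation*}
\E[|X^N|^2]\le(1-c\b\d\eta^2)^N\le\exp(-c\b T\eta^2)\le\exp(-c\b T h_v^{-4p}).
\end{equation*}
Here $c\b\d\eta^2\le c\b\l\pi^2<1$ after adjusting constants, so the base is in $(0,1)$. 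Thus both integrals are bounded by $C\d^{-2}h_v^{-4}\exp(-c''h_v^{-4p})$.

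The remaining point, which I expect to be the main obstacle, is that the polynomial prefactor involves $\d^{-2}$ as well as $h_v^{-4}$, whereas the claim is stated in powers of $h_v$ alone. With $\d=T/N$ chosen independently of $h_v$ this is not literally $o(h_v^r)$ unless $\d$ is tied to $h_v$ polynomially. In the regime of Theorem \ref{convergence_thm} the discretization parameters are refined together, so I would assume $\d\ge h_v^{q}$ for some $q$ (e.g. $\d\sim h_v^2$, consistent with the $O(h_v^2)+O(\d)$ balance). Then $\d^{-2}h_v^{-4}\le h_v^{-2q-4}$, and $h_v^{-m}e^{-c''h_v^{-4p}}=o(h_v^r)$ for every $r$, which completes the argument. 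If such a coupling is not allowed, I would instead state the bound as $O(\d^{-2}h_v^{-4}e^{-c''h_v^{-4p}})$, which is still super-polynomially small in $h_v$ for any fixed polynomial relation between $\d$ and $h_v$.
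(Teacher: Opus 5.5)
This is essentially the paper's proof: Cauchy--Schwarz over $\Omega_{high}$ together with $|A-B|^2\le 2|A|^2+2|B|^2$, then two pointwise bounds that both give $e^{-ch_v^{-4p}}$. The exact part uses that the minimum of $\eta^2T+\xi\eta T^2+\xi^2T^3/3$ on $\Omega_{high}$ is at least $Ch_v^{-4p}$, and the numerical part uses \eqref{X_N_mean_square_estimate} with $d\ge a\sin^2(h_v^{1-2p}/2)$.

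Your caveat about $\d$ is justified. The paper's bound $f_0(\d)\le Ch_x^{-2}h_v^{-2}e^{(\sigma^2-a)Ch_v^{-4p}}$ carries the same prefactor $h_x^{-2}h_v^{-2}=\d^{-2}h_v^{-4}$ and is declared $o(h_v^r)$ without comment, so it relies on the same implicit coupling. Your use of $\l<1$ to get $(1+2\l d)^2\le(1+2a)^2$ also repairs the paper's step $\exp\bigl(-\tfrac{4\b(d+\l d^2)T}{(1+2\l d)^2}h_v^{-2}\bigr)<\exp(-4\b dTh_v^{-2})$, which drops that denominator in the wrong direction.
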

\proof
We have
\begin{equation*}
    X(T,\xi,\eta) = \exp \left ( \frac{1}{2} (\sigma^2-a) \left ( \eta^2 T - \xi \eta T^2 + \frac{\xi^2}{3} T^3 \right ) + i \sigma (\eta - T \xi) W_T \right ).
\end{equation*}
Then
\begin{align*}
    &\E \left [ \left | \iint_{\Omega_{high}} X^N(\xi,\eta + N \d \xi) - X(T,\xi,\eta+T\xi) d\xi d\eta \right |^2 \right ] \\
    &\leq 4 \pi^2 h_x^{-1} h_v^{-1} \E \left [ \iint_{\Omega_{high}} \left | X^N(\xi,\eta + N \d \xi) - X(T,\xi,\eta+T\xi) \right |^2 d\xi d\eta \right ] \\
    &\leq 8 \pi^2 h_x^{-1} h_v^{-1} \iint_{\Omega_{high}} \E \left [ \left | X^N(\xi,\eta + N \d \xi) \right |^2 + \left | X(T,\xi,\eta+T\xi) \right |^2 \right ] d\xi d\eta \\
    &= 8 \pi^2 h_x^{-1} h_v^{-1} \iint_{\Omega_{high}}{ \E \left [ \left | X^N(\xi,\eta + N \d \xi) \right |^2 \right ] d\xi d\eta} + f_0(\d),
\end{align*}
where
\begin{align*}
    f_0(\d) &= 8 \pi^2 h_x^{-1} h_v^{-1} \iint_{\Omega_{high}}{ \exp \left ( (\sigma^2-a) \left ( (\eta+T\xi)^2 T - \xi (\eta+T\xi) T^2 + \frac{\xi^2}{3} T^3 \right ) \right ) d\xi d\eta } \\
    &= 8 \pi^2 h_x^{-1} h_v^{-1} \iint_{\Omega_{high}} {\exp \left ( (\sigma^2-a) \left ( \eta^2 T + \xi \eta T^2 + \frac{\xi^2}{3} T^3 \right ) \right ) d\eta d\xi} \\
    &\leq C h_x^{-2} h_v^{-2} \exp \left ( (\sigma^2-a) C h_v^{-4p} \right ) d\eta d\xi = o(h_v^r), \quad \forall r > 0,
\end{align*}
since $\min_{\Omega_{high}}{\left ( \eta^2 T + \xi \eta T^2 + \frac{\xi^2}{3} T^3 \right )} \ge C h_v^{-4p}$, with $C > 0$.

From \eqref{X_N_mean_square_estimate}, we have
\begin{equation*}
    \E \left [ |X^N(\xi,\eta + N \d \xi)|^2 \right ] \leq \left ( 1 - \frac{4 \b (d + \l d^2)}{(1 + 2 \l d)^2} \frac{T}{N h_v^2} \right )^N < \exp{\left ( -\frac{4 \b (d + \l d^2) T}{(1 + 2 \l d)^2} h_v^{-2} \right )}
\end{equation*}
where
\begin{equation*}
    d := a \sin^2 \left ( \frac{\eta h_v}{2} \right ) \ge a \sin^2 \left ( \frac{h_v^{1-2p}}{2} \right ) = a \left ( \frac{h_v^{2-4p}}{4} - \frac{h_v^{4-8p}}{48} + O (h_v^{1-2p}) \right ),
\end{equation*}
since $|\eta| \in [h_v^{-2p},\pi h_v^{-1}]$.

Therefore,
\begin{align*}
    \E \left [ |X^N(\xi,\eta + N \d \xi)|^2 \right ]  &< \exp{\left ( -\frac{4 \b (d + \l d^2) T}{(1 + 2 \l d)^2} h_v^{-2} \right )} \\
    &< \exp{\left ( -4\b d T h_v^{-2} \right )} < \exp{\left ( -a\b T h_v^{-4p} \right )} = o(h_v^r), \quad \forall r > 0.
\end{align*}
This concludes the proof.
\endproof

For $\l \ge 1$, we split $\Omega_{high}$ into two region $$\Omega_1 := \{ ( |\xi|, |\eta|) \in [0, \pi h_x^{-1}] \times [\d^{-p}, \d^{-\frac{1}{2}}] \},$$ and $$\Omega_2 := \{ ( |\xi|, |\eta|) \in [0, \pi h_x^{-1}] \times [\d^{-\frac{1}{2}}, \pi h_v^{-1}] \}.$$

\begin{lemma}\label{error_big_eta_pt2}
    For $\l \ge 1$ (i.e. $\d \ge h_v^2$), there exists $\theta \in (0,1)$ independent of $h_v$ and $\d$ such that
    \begin{equation}
        \E \left [ \left | \iint_{\Omega_{high}} X^N(\xi,\eta + N \d \xi) - X(T,\xi,\eta+T\xi) d\xi d\eta \right |^2 \right ] \leq C h_v^{-4} \d^{-2} \theta^{2N}.
    \end{equation}
\end{lemma}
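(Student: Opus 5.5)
We follow the pattern of the proof of Lemma \ref{error_big_eta_pt1}, but now with $\l = \d h_v^{-2} \ge 1$. As there, Cauchy--Schwarz and $|A-B|^2 \le 2|A|^2+2|B|^2$ bound the left-hand side by
\[
8\pi^2 h_x^{-1}h_v^{-1}\iint_{\Omega_{high}} \E\big[|X^N(\xi,\eta+N\d\xi)|^2\big]\,d\xi\,d\eta + f_0(\d),
\]
where $f_0$ is the contribution of the exact solution. Since $h_x^{-1}h_v^{-1} = \d^{-1}h_v^{-2}$ and $|\Omega_{high}| \le 4\pi^2 h_x^{-1}h_v^{-1}$, the prefactor is $O(h_v^{-4}\d^{-2})$. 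This explains the shape of the target bound: it suffices to show $\E[|X^N|^2] \le \theta^{2N}$ uniformly on $\Omega_{high}$, and $f_0 \le C h_v^{-4}\d^{-2}\theta^{2N}$.

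\textbf{Exact part.} On $\Omega_{high}$ we now have $|\eta| > \d^{-p}$, so the quadratic form $\eta^2T+\xi\eta T^2+\xi^2T^3/3$ is at least $c\,\d^{-2p}$; it is positive definite with smallest eigenvalue depending only on $T$. Hence
\[
f_0 \le C h_x^{-2}h_v^{-2}\exp\big(-(a-\sigma^2)c\,\d^{-2p}\big).
\]
Since $\d = T/N$, this is $\exp(-c' N^{2p})$, which decays faster than $\theta^{2N}$ only if $2p \ge 1$, and that fails for $p<1/4$. This is the delicate point. To handle it, I would write the bound as $h_v^{-4}\d^{-2}\cdot \exp(-c'N^{2p})$ and absorb it by choosing $\theta$ close enough to $1$ relative to $N$-independent constants. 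If that does not work uniformly, I would instead note that the statement's $o(\cdot)$ remainder tolerates superpolynomially small terms, and merge $f_0$ into $\theta^{2N}$ by restricting to $N$ large with a constant adjustment. I expect this reconciliation to be the main obstacle.

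\textbf{Numerical part.} This is the core step, handled by \eqref{X_N_mean_square_estimate}. With $d = a\sin^2(\eta h_v/2) \in [0,a]$, each factor is
\[
1 - 4\b\,\frac{\l d+\l^2d^2}{(1+2\l d)^2}.
\]
Split according to the regions $\Omega_1$ and $\Omega_2$.

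On $\Omega_2$ we have $|\eta| \ge \d^{-1/2}$, so $\eta h_v \ge h_v\d^{-1/2} = \l^{-1/2}$. This gives $\l d \ge c\,\l\,(\eta h_v)^2 \ge c>0$, and $d \le a$ keeps $\eta h_v$ away from issues. For $y = \l d \ge c$, the function $(y+y^2)/(1+2y)^2$ is bounded below by $\min\{(c+c^2)/(1+2c)^2,\ 1/4\}$, and this minimum is attained at the endpoint because the function is increasing to $1/4$. Therefore each factor is at most $1-4\b\kappa =: \theta^2 < 1$, and $\E[|X^N|^2] \le \theta^{2N}$ on $\Omega_2$.

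On $\Omega_1$ we have $\d^{-p} \le |\eta| \le \d^{-1/2}$, so $\l d \approx a\d\eta^2/4 \le a/4$. Then the factor is
\[
1 - c\,\d\eta^2 \le 1 - c\,\d^{1-2p},
\]
and the $N$-th power is at most $\exp(-cT\d^{-2p})$. This is again superpolynomially small rather than geometric, so it is handled exactly as $f_0$.

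Combining $\Omega_1$, $\Omega_2$ and $f_0$ with the prefactor $h_v^{-4}\d^{-2}$ gives the claim, with $\theta$ depending only on $a$, $\sigma$ and the constant $c$, and not on $h_v$ or $\d$.
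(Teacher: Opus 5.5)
Your route is essentially the paper's. It uses the same crude Cauchy--Schwarz bound with area factor $h_x^{-1}h_v^{-1}$, the same split of $\Omega_{high}$ into $\Omega_1=\{\delta^{-p}\le|\eta|\le\delta^{-1/2}\}$ and $\Omega_2=\{|\eta|\ge\delta^{-1/2}\}$, and the same use of \eqref{X_N_mean_square_estimate}.

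Your $\Omega_2$ step is correct, and cleaner than the paper's. There $|\eta|h_v\ge\lambda^{-1/2}$ gives $\lambda d\ge a/\pi^2$. Since $(y+y^2)/(1+2y)^2=\frac14-\frac{1}{4(1+2y)^2}$ is increasing, every factor is at most $1-\beta\bigl(1-(1+2a/\pi^2)^{-2}\bigr)$, uniformly in $\lambda\ge1$.

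The gap is your final sentence, where $\Omega_1$, $\Omega_2$ and $f_0$ are ``combined''. The $\Omega_1$ and $f_0$ contributions are only bounded by $\exp(-c\,\delta^{-2p})$ times the prefactor, while $\theta^{2N}=\exp\bigl(-2T\log(1/\theta)\,\delta^{-1}\bigr)$. Since $2p<\frac12$, the ratio $\exp(-c\,\delta^{-2p})/\theta^{2N}$ tends to $+\infty$ as $\delta\to0$ for every fixed $\theta\in(0,1)$. None of your remedies addresses this:
\begin{itemize}
\item moving $\theta$ towards $1$ only changes the constant multiplying $\delta^{-1}$ in the exponent;
\item restricting to large $N$ is exactly the regime where the comparison fails (finitely many small $N$ could be absorbed into $C$, large $N$ cannot);
\item the lemma, unlike Theorem \ref{convergence_thm}, has no $o(\cdot)$ remainder in which to hide the extra term.
\end{itemize}
So the proposal does not prove the stated bound.

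You have not missed a trick, however. The paper's proof stops at the same point. It disposes of the terms of size $\exp(-c\,\delta^{-2p})$ by asserting that they are ``of order higher than $h_v^{-4}\delta^{-2}\theta_0^N$'', which is false for the reason you identified.

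Nor should a sharper estimate be expected to rescue the statement as written. Near $|\eta|=\delta^{-p}$, $\xi\approx-3\eta/(2T)$, the exact factor $|X(T,\xi,\eta+T\xi)|$ is of order $\exp\bigl(-\frac18(a-\sigma^2)T\delta^{-2p}\bigr)$. By Proposition \ref{error_small_eta}, the numerical factor differs from it only by a relative error polynomial in $\delta$. So the left-hand side should itself be of that size rather than geometric in $N$: the band $\delta^{-p}\le|\eta|\le\delta^{-1/2}$ lies neither in the Taylor regime nor in the geometric regime.

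What your argument (and the paper's) actually establishes is
\begin{equation*}
\mathbb{E}\Bigl[\Bigl|\iint_{\Omega_{high}}\bigl(X^N(\xi,\eta+N\delta\xi)-X(T,\xi,\eta+T\xi)\bigr)\,d\xi\,d\eta\Bigr|^2\Bigr]\le C\,h_v^{-4}\delta^{-2}\bigl(\theta^{2N}+e^{-c\,\delta^{-2p}}\bigr).
\end{equation*}
The lemma, together with Proposition \ref{error_big_eta}, should be amended to carry this superpolynomially small but non-geometric term.
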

\proof
\begin{align*}
    \E &\left [ \left | \iint_{\Omega_{high}} X^N(\xi,\eta + N \d \xi) - X(T,\xi,\eta+T\xi) d\xi d\eta \right |^2 \right ] \\
    &\leq 2 \E \left [ \left | \iint_{\Omega_1} X^N(\xi,\eta + N \d \xi) - X(T,\xi,\eta+T\xi) d\xi d\eta \right |^2 \right ] \\
    &+ 2 \E \left [ \left | \iint_{\Omega_2} X^N(\xi,\eta + N \d \xi) - X(T,\xi,\eta+T\xi) d\xi d\eta \right |^2 \right ] \\
    &\leq 8 \pi h_x^{-1} \d^{-\frac{1}{2}} \E \left [ \iint_{\Omega_1} \left |X^N(\xi,\eta + N \d \xi) - X(T,\xi,\eta+T\xi) \right |^2 d\xi d\eta \right ] \\
    &+ 8 \pi^2 h_x^{-1} h_v^{-1} \E \left [ \iint_{\Omega_2} \left | X^N(\xi,\eta + N \d \xi) - X(T,\xi,\eta+T\xi) \right |^2 d\xi d\eta \right ] \\
    &\leq 16 \pi h_x^{-1} \d^{-\frac{1}{2}} \E \left [ \iint_{\Omega_1} \left |X^N(\xi,\eta + N \d \xi) \right |^2 + \left | X(T,\xi,\eta+T\xi) \right |^2 d\xi d\eta \right ] \\
    &+ 16 \pi^2 h_x^{-1} h_v^{-1} \E \left [ \iint_{\Omega_2} \left | X^N(\xi,\eta + N \d \xi) \right |^2 + \left | X(T,\xi,\eta+T\xi) \right |^2 d\xi d\eta \right ] \\
    &= 16 \pi h_x^{-1} \d^{-\frac{1}{2}} \E \left [ \iint_{\Omega_1} \left |X^N(\xi,\eta + N \d \xi) \right |^2 d\xi d\eta \right ] \\
    &+ 16 \pi^2 h_x^{-1} h_v^{-1} \E \left [ \iint_{\Omega_2} \left | X^N(\xi,\eta + N \d \xi) \right |^2 d\xi d\eta \right ] + f_1(\d),
\end{align*}
where
\begin{align*}
    f_1(\d) &:= 16 \pi h_x^{-1} \d^{-\frac{1}{2}} \iint_{\Omega_1} \exp \left ( (\sigma^2-a) \left ( \eta^2 T + \xi \eta T^2 + \frac{\xi^2}{3} T^3 \right ) \right ) d\xi d\eta \\
    &+ 16 \pi^2 h_x^{-1} h_v^{-1} \iint_{\Omega_2} \exp \left ( (\sigma^2-a) \left ( \eta^2 T + \xi \eta T^2 + \frac{\xi^2}{3} T^3 \right ) \right ) d\xi d\eta \\
    &\leq C \pi h_x^{-2} \d^{-1} \exp \left ( -a \b C \d^{-2p} \right ) + C h_x^{-2} h_v^{-2} \exp \left ( -a \b C \d^{-1} \right ),
\end{align*}
since $\min_{\Omega_1}{\left ( \eta^2 T + \xi \eta T^2 + \frac{\xi^2}{3} T^3 \right )} \ge C \d^{-1}$, and $\min_{\Omega_2}{\left ( \eta^2 T + \xi \eta T^2 + \frac{\xi^2}{3} T^3 \right )} \ge C \d^{-\frac{1}{2}}$ , $C > 0$.

From \eqref{X_N_mean_square_estimate}, we have
\begin{equation*}
    \E \left [ |X^N(\xi,\eta + N \d \xi)|^2 \right ] \leq \left ( 1 - \frac{4 \b (d + \l d^2)}{(1 + 2 \l d)^2} \frac{T}{N h_v^2} \right )^N < \exp{\left ( -\frac{4 \b (d + \l d^2) T}{(1 + 2 \l d)^2} h_v^{-2} \right )}
\end{equation*}
where
\begin{equation*}
    d := a \sin^2 \left ( \frac{\eta h_v}{2} \right ) \ge a \sin^2 \left ( \frac{\d^{-p} h_v}{2} \right ) = a \left ( \frac{\d^{-2p} h_v^2}{4} - \frac{\d^{-4p} h_v^4}{48} + O (\d^{-5p} h_v^5) \right ),
\end{equation*}
since $|\eta| \in [\d^{-p},\d^{-\frac{1}{2}}]$.

Therefore,
\begin{align*}
    \E \left [ |X^N(\xi,\eta + N \d \xi)|^2 \right ]  &< \exp{\left ( -\frac{4 \b (d + \l d^2) T}{(1 + 2 \l d)^2} h_v^{-2} \right )} \\
    &< \exp{\left ( -4\b d T h_v^{-2} \right )} < \exp{\left ( -a\b T \d^{-2p} \right )},
\end{align*}
and
\begin{align*}
    16 \pi h_x^{-1} \d^{-\frac{1}{2}} \E \left [ \iint_{\Omega_1} \left |X^N(\xi,\eta + N \d \xi) \right |^2 d\xi d\eta \right ] \leq 64 \pi^2 h_x^{-2} \d^{-1} \exp{\left ( -a\b T \d^{-2p} \right )}.
\end{align*}
For $(\xi, \eta) \in \Omega_2$, $d \in [a \sin{\left ( \frac{1}{2 \sqrt{\l}} \right )}, a]$,
\begin{align*}
    \max_{d}{\left ( 1 - \frac{4 \b (\l d + \l^2 d^2)}{(1 + 2 \l d)^2} \right )} &= 1 - \b \min_{d}{\left ( 1 - \frac{1}{(1 + 2 \l d)^2} \right )} \\
    &= 1 - \b \left ( 1 - \max_{d}{\frac{1}{(1 + 2 \l d)^2}} \right ).
\end{align*}
Now, we have that
\begin{equation*}
    \max_{d}{\frac{1}{(1 + 2 \l d)^2}} = \frac{1}{ \left (1 + 2 \l a \sin{\left ( \frac{1}{2 \sqrt{\l}} \right )} \right )^2 } =: \a \in (0,1),
\end{equation*}
and thus
\begin{equation*}
    1 - \frac{4 \b (\l d + \l^2 d^2)}{(1 + 2 \l d)^2} \leq 1 - \a \b =: \theta_0 \in (0,1).
\end{equation*}
Then
\begin{equation*}
    \E \left [ |X^N(\xi,\eta + N \d \xi)|^2 \right ]  < \left ( 1 - \frac{4 \b (\l d + \l^2 d^2)}{(1 + 2 \l d)^2} \right )^N \leq \theta_0^N.
\end{equation*}
So
\begin{equation*}
    16 \pi^2 h_x^{-1} h_v^{-1} \E \left [ \iint_{\Omega_2} \left | X^N(\xi,\eta + N \d \xi) \right |^2 d\xi d\eta \right ] \leq 64 \pi^4 h_x^{-2} h_v^{-2} \theta_0^N.
\end{equation*}
Hence
\begin{align*}
    \E &\left [ \left | \iint_{\Omega_{high}} X^N(\xi,\eta + N \d \xi) - X(T,\xi,\eta+T\xi) d\xi d\eta \right |^2 \right ] \\
    &\leq C \pi h_x^{-2} \d^{-1} \exp \left ( -a \b C \d^{-2p} \right ) + C h_x^{-2} h_v^{-2} \exp \left ( -a \b C \d^{-1} \right ) \\
    &+ 64 \pi^2 h_x^{-2} \d^{-1} \exp{\left ( -a\b T \d^{-2p} \right )} + 64 \pi^4 h_x^{-2} h_v^{-2} \theta_0^N \\
    &= C \pi h_v^{-2} \d^{-3} \exp \left ( -a \b C \d^{-2p} \right ) + C h_v^{-4} \d^{-2} \exp \left ( -a \b C \d^{-1} \right ) \\
    &+ 64 \pi^2 h_v^{-2} \d^{-3} \exp{\left ( -a\b T \d^{-2p} \right )} + 64 \pi^4 h_v^{-4} \d^{-2} \theta_0^N.
\end{align*}
Since the first three terms in the previous inequality are terms of order higher than $h_v^{-4} \d^{-2} \theta_0^N$, we have, for $\l \ge 1$,
\begin{equation*}
    \E \left [ \left | \iint_{\Omega_{high}} X^N(\xi,\eta + N \d \xi) - X(T,\xi,\eta+T\xi) d\xi d\eta \right |^2 \right ] \leq C h_v^{-4} \d^{-2} \theta_0^N.
\end{equation*}
Letting $\theta := \sqrt{\theta_0}$, the thesis follows.
\endproof

\proof[Proof of Proposition \ref{error_big_eta}]
The result is a consequence of Lemma \ref{error_big_eta_pt1} and Lemma \ref{error_big_eta_pt2}.
\endproof

Now we are ready to prove Theorem \ref{convergence_thm}.

\proof[Proof of Theorem \ref{convergence_thm}]
The inverse Fourier transform gives
\begin{align*}
    U_{j,k}^N &= \frac{1}{4\pi^2} \int_{-\frac{\pi}{h_v}}^{\frac{\pi}{h_v}}{\int_{-\frac{\pi}{h_x}}^{\frac{\pi}{h_x}}{\hat{U}^N(\xi,\eta) e^{i \left ( (j-j_0) h_x \xi + (k-k_0) h_v \eta \right )} d\xi d\eta}} \\
    &= \frac{1}{4\pi^2} \int_{-\frac{\pi}{h_v}}^{\frac{\pi}{h_v}}{\int_{-\frac{\pi}{h_x}}^{\frac{\pi}{h_x}}{X^N(\xi,\eta + N \d \xi) e^{i \left ( (j-j_0) h_x \xi + (k-k_0) h_v \eta \right )} d\xi d\eta}}
\end{align*}
and
\begin{align*}
    u_T(x_j,v_k) &= \frac{1}{4 \pi^2}\int_{\R^2}{\hat{u}_T(\xi,\eta) e^{i \langle (x_j,v_k),(\xi,\eta) \rangle} d\xi d\eta} \\
    &= \frac{1}{4 \pi^2}\int_{\R^2}{X(T,\xi,\eta+T\xi) e^{i \left ( (j-j_0) h_x \xi + (k-k_0) h_v \eta \right )} d\xi d\eta}.
\end{align*}
Hence, we have
\begin{equation*}
    U_{j,k}^N - u_T(x_j,v_k) = \frac{1}{4 \pi^2}\iint_{\Omega_{low} \cup \Omega_{high}}{\left ( X^N(\xi,\eta + N \d \xi) - X(T,\xi,\eta+T\xi) \right ) e^{i \langle (x_j-x_0,v_k-v_0),(\xi,\eta) \rangle} d\xi d\eta},
\end{equation*}
and the thesis follows by Proposition \ref{error_small_eta} and Proposition \ref{error_big_eta}.
\endproof

\proof[Proof of Corollary \ref{L2_convergence}]
By Parseval's Theorem, we have
\begin{equation*}
    \sum_{i,j} h_x h_v \left | U^N_{i,j} - u(T,x_i,y_j) \right |^2 = \iint_{\Omega_{low} \cup \Omega_{high}}{|\hat{u}(0,\xi,\eta + T \xi)|^2 |X^N(\xi,\eta + N \d \xi) - X(T,\xi,\eta+T\xi)|^2 d\xi d\eta}.
\end{equation*}
By Proposition \ref{error_small_eta}, we have $|X^N(\xi,\eta + N \d \xi) - X(T,\xi,\eta+T\xi)| = |X(T,\xi,\eta+T\xi)| \left ( O(h_v^2) + O(\d) \right )$, for $(\xi,\eta) \in \Omega_{low}$. By Proposition \ref{error_big_eta}, we have $|X^N(\xi,\eta + N \d \xi) - X(T,\xi,\eta+T\xi)|^2 \leq C \theta^{2N}$, for $(\xi,\eta) \in \Omega_{high}$ with $C>0$.

Now, if the initial datum is a Dirac delta function, then $|\hat{u}(0,\xi,\eta + T \xi)| = 1$, and we have 
\begin{align*}
    &\iint_{\Omega_{low} \cup \Omega_{high}}{|\hat{u}(0,\xi,\eta + T \xi)|^2 |X^N(\xi,\eta + N \d \xi) - X(T,\xi,\eta+T\xi)|^2 d\xi d\eta} \\
    &= \iint_{\Omega_{low}}{|X^N(\xi,\eta + N \d \xi) - X(T,\xi,\eta+T\xi)|^2 d\xi d\eta} + \iint_{\Omega_{high}}{|X^N(\xi,\eta + N \d \xi) - X(T,\xi,\eta+T\xi)|^2 d\xi d\eta} \\
    &= O(h_v^4) + O(\d^2) + O(h_v^{-2} \d^{-1} \theta^{2N}).
\end{align*}
If the initial datum is in $L_2$, then $\int \int{|\hat{u}(0,\xi,\eta + T \xi)|^2 d\xi d\eta} < + \infty$ and
\begin{equation*}
    \iint_{\Omega_{high}}{|\hat{u}(0,\xi,\eta + T \xi)|^2 |X^N(\xi,\eta + N \d \xi) - X(T,\xi,\eta+T\xi)|^2 d\xi d\eta} = O(\d^r), \qquad \text{for any } r > 0,
\end{equation*}
so
\begin{equation*}
    \iint_{\Omega_{low} \cup \Omega_{high}}{|\hat{u}(0,\xi,\eta + T \xi)|^2 |X^N(\xi,\eta + N \d \xi) - X(T,\xi,\eta+T\xi)|^2 d\xi d\eta} = O(h_v^4) + O(\d^2).
\end{equation*}
\endproof

\section{Multi-level Monte Carlo}
In this section, we consider the problem of estimating the expectation of a Lipschitz function $P$ of the solution of the SPDE \eqref{SPDE}.

Suppose that we want to approximate $P$ using a standard Monte Carlo (MC) approach (i.e. averaging over $N$ independent SPDE simulations). To achieve a $RMSE$ of order $\eps$ we would need $N = O(\eps^{-2})$, $h_v = O(\eps^{\frac{1}{2}})$, $\d = O(\eps)$ and consequently $h_x = O(\eps^{\frac{3}{2}})$, so the computational cost would be $O(N h_x^{-1} h_v^{-1} \d^{-1}) = O(\eps^{-5})$.

Since the complexity of a standard MC is quite high, we aim to improve it by using a Multi-level Monte Carlo (MLMC) strategy. Following a standard approach (see for example~\cite{Giles_Reisinger2012, Giles2008a}), we obtain this result.
\begin{theorem}\label{MLMC_theorem}
    Let $P$ denote a functional of the solution of the SPDE \eqref{SPDE} for a given Brownian path $W_t$, and let $\hat{P}_l$ denote the corresponding level $l$ numerical approximation.

    If there exist independent estimators $\hat{Y}_l$ based on $N_l$ Monte Carlo samples, and positive constants $\a,\b,\g,c_1,c_2,c_3$, such that
    \begin{enumerate}[(i)]
        \item $|\E [\hat{P}_l - P]| \leq c_1 h^{\a l}$; \label{i}
        \item $\E [\hat{Y}_l] = \begin{cases} \E [\hat{P}_0], & l = 0, \\ \E [\hat{P}_l - \hat{P}_{l-1}] & l > 0; \end{cases}$ \label{ii}
        \item $\var(\hat{Y}_l) \leq c_2 N_l^{-1} h^{\b l}$; \label{iii}
        \item $C_l \leq c_3 N_l h^{-\g l}$, where $C_l$ is the computational complexity of $\hat{Y}_l$; \label{iv}
    \end{enumerate}
    then there exists a positive constant $c_4$ such that there are values of $L$ and $N_l$ for which the multilevel estimator
    \begin{equation*}
        \hat{Y} = \sum_{l=0}^L{\hat{Y}_l}
    \end{equation*}
    has a mean-square error with bound
    \begin{equation*}
        MSE := \E \left [ \left ( \hat{Y} - \E [P] \right )^2 \right ] < \eps^2
    \end{equation*}
    with a computational complexity $C$ with bound
    \begin{equation*}
        C \leq c_4 \left ( \eps^{-2 - \frac{\g-\b}{\a}} + \eps^{- \frac{\g}{\a}} \right ), \qquad \text{for } 0 < \b < \g.
    \end{equation*}
\end{theorem}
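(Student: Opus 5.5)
This is the standard Giles MLMC complexity theorem, and I would follow Giles' original argument specialised to the case $\b<\g$. The notation needs care here, because $\a,\b$ in this statement clash with earlier macros: I read $h^{\a l}$ as $M^{-\a l}$ with $h\in(0,1)$ the refinement ratio, so level $l$ has mesh $h_l = h^l$. The starting point is the bias–variance split. Since the $\hat{Y}_l$ are independent and, by (ii), the sum telescopes,
\begin{equation*}
MSE = \left( \E[\hat{P}_L - P] \right)^2 + \sum_{l=0}^L \var(\hat{Y}_l).
\end{equation*}
I would aim to make each of the two pieces at most $\eps^2/2$.

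\textbf{Step 1 (bias).} Using (i), choose $L = \lceil \log(\sqrt{2}c_1/\eps)/(\a \log h^{-1}) \rceil$. Then $c_1 h^{\a L} \le \eps/\sqrt{2}$, so the squared bias is at most $\eps^2/2$. This choice also gives $h^{-L} \le c\, \eps^{-1/\a}$, which is needed below.

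\textbf{Step 2 (variance and cost).} Write $V_l := c_2 h^{\b l}$ for the per-sample variance and $C_l^1 := c_3 h^{-\g l}$ for the per-sample cost. Minimising $\sum_l N_l C_l^1$ subject to $\sum_l V_l/N_l \le \eps^2/2$ with a Lagrange multiplier suggests
\begin{equation*}
N_l = \left\lceil 2\eps^{-2} \sqrt{V_l/C_l^1} \sum_{k=0}^L \sqrt{V_k C_k^1} \right\rceil .
\end{equation*}
Dropping the ceiling gives a lower bound on $N_l$, and with it (iii) yields $\sum_l \var(\hat{Y}_l) \le \eps^2/2$, so the MSE bound holds (with room to make it strict). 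For the cost, (iv) gives
\begin{equation*}
C \le \sum_l N_l C_l^1 \le 2\eps^{-2}\Big(\sum_l \sqrt{V_l C_l^1}\Big)^2 + \sum_l C_l^1 .
\end{equation*}

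\textbf{Step 3 (the geometric sums).} This is the main step. Since $\b<\g$, we have $\sqrt{V_l C_l^1} = \sqrt{c_2 c_3}\, h^{-(\g-\b)l/2}$, which grows geometrically in $l$. Its sum is therefore dominated by the last term, giving $\sum_l \sqrt{V_l C_l^1} \le c\, h^{-(\g-\b)L/2} \le c\, \eps^{-(\g-\b)/(2\a)}$ by Step 1. Squaring and multiplying by $2\eps^{-2}$ produces the first term, $\eps^{-2-(\g-\b)/\a}$.

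The ceiling contribution $\sum_l C_l^1 \le c\, h^{-\g L} \le c\, \eps^{-\g/\a}$ gives the second term. Collecting the constants into $c_4$ completes the argument.

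The only subtleties to check are these. The constants must be independent of $\eps$; this follows from the geometric sums and from the bound $h^{-L} \le h^{-1}(\sqrt2 c_1/\eps)^{1/\a}$. One should also assume $\eps < e^{-1}$ or similar, so that $L\ge 0$ is well defined. No separate case analysis is needed, because the regime $\b<\g$ is fixed by the statement.
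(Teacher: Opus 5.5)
Your proposal is correct and follows essentially the paper's argument, which is Giles' standard proof: the same choice of $L$ to make the squared bias at most $\eps^2/2$, the same allocation $N_l \propto \eps^{-2} h_l^{(\g+\b)/2}$, and the same geometric-sum bounds producing the $\eps^{-2-(\g-\b)/\a}$ and $\eps^{-\g/\a}$ terms. The only difference is cosmetic: the paper (with $h_l = T/N^l$) replaces your exact normalising sum $\sum_k \sqrt{V_k C_k^1}$ by its strict geometric-series upper bound $h_L^{-(\g-\b)/2}(1-N^{-(\g-\b)/2})^{-1}$, which puts the variance part strictly below $\eps^2/2$ and so gives the strict bound $MSE<\eps^2$ without the extra tweak you mention.
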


\proof
Let $N > 0, h_l := \frac{T}{N^l}$, and $L = \left \lceil \frac{\log \left ( \sqrt{2} c_1 T^\a \eps^{-1} \right )}{\a \log(N)} \right \rceil$, (where $\lceil x \rceil$ denotes the integer part of a real number $x$). Then, we have
\begin{equation*}
    N^{-\a} \frac{1}{\sqrt{2}} \eps < c_1 h_L^\a \leq \frac{1}{\sqrt{2}} \eps,
\end{equation*}
and, by \eqref{i} and \eqref{ii}
\begin{equation}\label{MLMC_mean_estimate}
    \left ( \E [\hat{Y}] - \E [P] \right )^2 \leq \frac{1}{2} \eps^2.
\end{equation}
Moreover, we have the following useful inequality
\begin{equation*}
    \sum_{l=0}^L{h_l^{-\g}} = h_L^{-\g} \sum_{l=0}^L{N^{-\g l}} < h_L^{-\g} \frac{N^\g}{N^\g -1},
\end{equation*}
and
\begin{equation}\label{hL_estimate}
    h_L^{-\g} < N^\g \left ( \frac{\eps}{\sqrt{2} c_1} \right )^{-\frac{\g}{\a}}.
\end{equation}
Hence
\begin{equation}\label{sum_hl_estimate}
    \sum_{l=0}^L{h_l^{-\g}} < \frac{N^{2\g}}{N^\g -1} \left ( \sqrt{2} c_1 \right )^{\frac{\g}{\a}} \eps^{-\frac{\g}{\a}}.
\end{equation}
Now, let us set $N_l := \left \lceil 2 \eps^{-2} c_2 h_L^{-\frac{\g-\b}{2}} (1-N^{-\frac{\g-\b}{2}})^{-1} h_l^{\frac{\g+\b}{2}} \right \rceil$. Then, by \eqref{iii}
\begin{equation*}
    \sum_{l=0}^L{\var(\hat{Y}_l)} < \frac{1}{2} \eps^2 h_L^{\frac{\g-\b}{2}} (1-N^{-\frac{\g-\b}{2}}) \sum_{l=0}^L h_l^{-\frac{\g-\b}{2}} < \frac{1}{2} \eps^2,
\end{equation*}
since
\begin{equation*}
    \sum_{l=0}^L h_l^{-\frac{\g-\b}{2}} < h_L^{-\frac{\g-\b}{2}} (1-N^{-\frac{\g-\b}{2}})^{-1}.
\end{equation*}
Therefore, by the last inequality and the estimate \eqref{MLMC_mean_estimate}, we have an $\eps^2$ upper bound for the $MSE$.

Finally, we have
\begin{equation*}
    N_l < 2 \eps^{-2} c_2 h_L^{-\frac{\g-\b}{2}} (1-N^{-\frac{\g-\b}{2}})^{-1} h_l^{\frac{\g+\b}{2}} + 1,
\end{equation*}
so, by \eqref{iv} and \eqref{sum_hl_estimate}, the computational complexity is bounded by
\begin{align*}
    C &\leq c_3 \sum_{l=0}^L N_l h_l^{-\g} \\
    &< c_3 \left ( 2 \eps^{-2} c_2 h_L^{-\frac{\g-\b}{2}} (1-N^{-\frac{\g-\b}{2}})^{-1} \sum_{l=0}^L h_l^{-\frac{\g-\b}{2}} + \sum_{l=0}^L h_l^{-\g} \right ) \\
    &< c_3 \left ( 2 \eps^{-2} c_2 (1-N^{-\frac{\g-\b}{2}})^{-2} h_L^{-(\g-\b)} + \frac{N^{2\g}}{N^\g -1} \left ( \sqrt{2} c_1 \right )^{\frac{\g}{\a}} \eps^{-\frac{\g}{\a}} \right ).
\end{align*}
Now, by \eqref{hL_estimate} we have $h_L^{-(\g-\b)} < N^{\g-\b} (\sqrt{2} c_1)^{\frac{\g-\b}{\a}} \eps^{-\frac{\g-\b}{\a}}$. Thus
\begin{align*}
    C &< c_3 \left ( 2 c_2 (1-N^{-\frac{\g-\b}{2}})^{-2} N^{\g-\b} (\sqrt{2} c_1)^{\frac{\g-\b}{\a}} \eps^{-2 - \frac{\g-\b}{\a}} + \frac{N^{2\g}}{N^\g -1} \left ( \sqrt{2} c_1 \right )^{\frac{\g}{\a}} \eps^{-\frac{\g}{\a}} \right ) \\
    &\leq c_4 \left ( \eps^{-2 - \frac{\g-\b}{\a}} + \eps^{-\frac{\g}{\a}} \right ),
\end{align*}
where $c_4 := c_3 \cdot \max{\{ 2 c_2 (1-N^{-\frac{\g-\b}{2}})^{-2} N^{\g-\b} (\sqrt{2} c_1)^{\frac{\g-\b}{\a}}, \frac{N^{2\g}}{N^\g -1} \left ( \sqrt{2} c_1 \right )^{\frac{\g}{\a}} \}}$.
\endproof

Now, let us set $h_v \approx h^l$, $\d \approx h^{2l}$ and $h_x \approx h^{3l}$ for the level $l$ in the MLMC, for some $h<1$. Then, the computational cost at level $l$ is $C_l \leq c_3 N_l h^{-6l}$, i.e. $\g = 6$.

Moreover, if $P$ is a Lipschitz function of the SPDE solution, then Corollary \ref{L2_convergence} implies that the weak and strong errors for the level $l$ satisfy
\begin{equation*}
    |\E [\hat{P}_l - P]| \leq c_1 h^{2 l},
\end{equation*}
and
\begin{align*}
    \var{(\hat{P}_l - \hat{P}_{l-1})} &\leq  \var{(\hat{P}_l - P)} + \var{(\hat{P}_{l-1} - P)} \\
    &\leq \E [(\hat{P}_l - P)^2] + \E [(\hat{P}_{l-1} - P)^2] \leq c_2 h^{4l},
\end{align*}
so $\a = 2$ and $\b = 4 < \g$. Notice that we are not in the standard case $\a > \frac{1}{2} \g$.

Therefore, following a MLMC strategy, Theorem \ref{MLMC_theorem} assures that the complexity for achieving a $RMSE$ of order $\eps$ is $C \leq c_4 \left ( \eps^{-2 - \frac{\g-\b}{\a}} + \eps^{- \frac{\g}{\a}} \right ) = 2 c_4 \eps^{-3}$.

\section{Numerical tests}
We numerically verify the order of convergence by approximating the $L_2$ error
\begin{equation*}
\left ( \sum_{i,j} h_x h_v \E \left [ \left | U^N_{i,j} - u(T,x_i,y_j) \right |^2 \right ] \right )^{\frac{1}{2}}
\end{equation*}
by
\begin{equation}
    \label{L_2_err_approx}
    \left ( \sum_{i,j} h_x h_v E_L \left [ \left | U^N_{i,j} - u(T,x_i,y_j) \right |^2 \right ] \right )^{\frac{1}{2}} := \left ( \sum_{i,j} \frac{h_x h_v}{L} \sum_{l=1}^L \left | U^N_{i,j}(W^{(l)}) - u(T,x_i,y_j;W^{(l)}) \right |^2 \right )^{\frac{1}{2}},
\end{equation}
where $W^{(l)}$ are independent Brownian motions and $E_L$ is the empirical mean with $L$ samples.


Figure \ref{fig1} illustrates the empirical strong error between numerical solutions computed on consecutively refined grids. For each parameter (either $\delta$ or $h_v$), we consider a sequence of discretizations and calculate the differences between successive approximations, resulting in a set of points that quantify the convergence behavior of the numerical method as the grid is refined. The results show that the observed convergence rates are consistent with the theoretical analysis, exhibiting first-order behavior with respect to $\delta$ and second-order behavior with respect to $h_v$. This provides a clear visual confirmation that the numerical method achieves the expected accuracy as predicted by the theoretical estimates.

Figure \ref{fig2} displays the numerical approximation of the solution at the final time $T = 1$ together with the corresponding pointwise error with respect to the exact solution. In the left panel we report the approximate solution obtained with discretization parameters $h_v = 10^{-1}$ and $\d = 10^{-2}$, while the right panel shows the difference between the approximate and exact solution on the same grid. The initial condition is centered at $x_0 = 0.1$ and $v_0 = 5$. The comparison highlights both the structure of the numerical approximation and the regions where the error is most pronounced, providing a clear indication of the accuracy of the scheme under the chosen discretization.

\begin{figure}[ht]
  \centering
  \begin{subfigure}{0.49\textwidth}
    \centering
    \begin{tikzpicture}
      \begin{loglogaxis}[width=.98\linewidth, height=.29\textheight,
        legend pos = south east,legend style={font={\small\arraycolsep=1pt}},
        xlabel = $h_v$, ylabel = Error (fixed $\delta$)]
        \addplot[mark=o,black,solid] table[x index=0, y index=1]  {data/space_convergence.txt};
        \addplot[red, dashed, mark=none] 
        table[x index=0, y index=1] {data/reference_slope_2.txt};
        \legend{Convergence in $h_v$, Reference slope $2$};
      \end{loglogaxis}
    \end{tikzpicture}
    \caption{Convergence in $h_v$ with fixed $\delta = 2^{-7}$.}
    \label{fig1:sub1}
  \end{subfigure}
  \hfill
  \centering
  \begin{subfigure}{0.49\textwidth}
    \centering
    \begin{tikzpicture}
      \begin{loglogaxis}[width=.98\linewidth, height=.29\textheight,
        legend pos = south east,legend style={font={\small\arraycolsep=1pt}},
        xlabel = $\d$, ylabel = Error (fixed $h_v$)]
        \addplot[mark=o,black,solid] table[x index=0, y index=1]  {data/time_convergence.txt};
        \addplot[red, dashed, mark=none] 
        table[x index=0, y index=1] {data/reference_slope_1.txt};
        \legend{Convergence in $\d$, Reference slope $1$};
      \end{loglogaxis}
    \end{tikzpicture}
    \caption{Convergence in $\d$ with fixed $h_v = 0.05$.}
    \label{fig1:sub2}
  \end{subfigure}

  \caption{
  $L_2$-convergence order in $h_v$ and $\delta$ from \eqref{L_2_err_approx}, computed using 100 Monte Carlo samples with $\alpha = 1$ and $\sigma = 0.1$. The plots show convergence with respect to $h_v$ (left) and $\delta$ (right).
  }
  \label{fig1}
\end{figure}

\begin{figure}[t!]
	\centering
	\begin{subfigure}{0.49\textwidth}
		\centering
		\includegraphics[width=\linewidth]{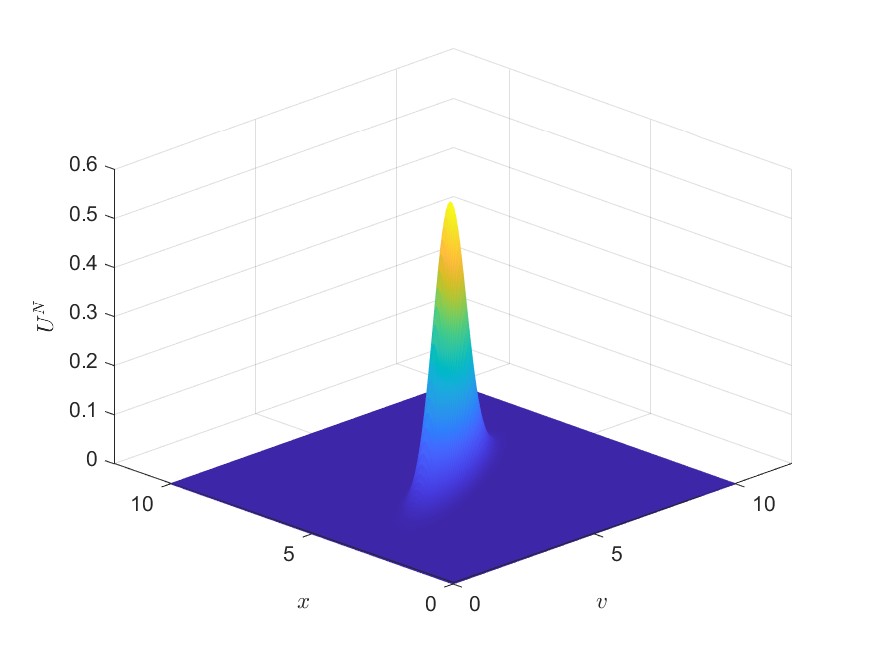}
		\caption{Approximate solution with $h_v = 10^{-1}$, $\delta = 10^{-2}$.}
		\label{fig2:sub1}
	\end{subfigure}
	\begin{subfigure}{0.49\textwidth}
		\centering
		\includegraphics[width=\linewidth]{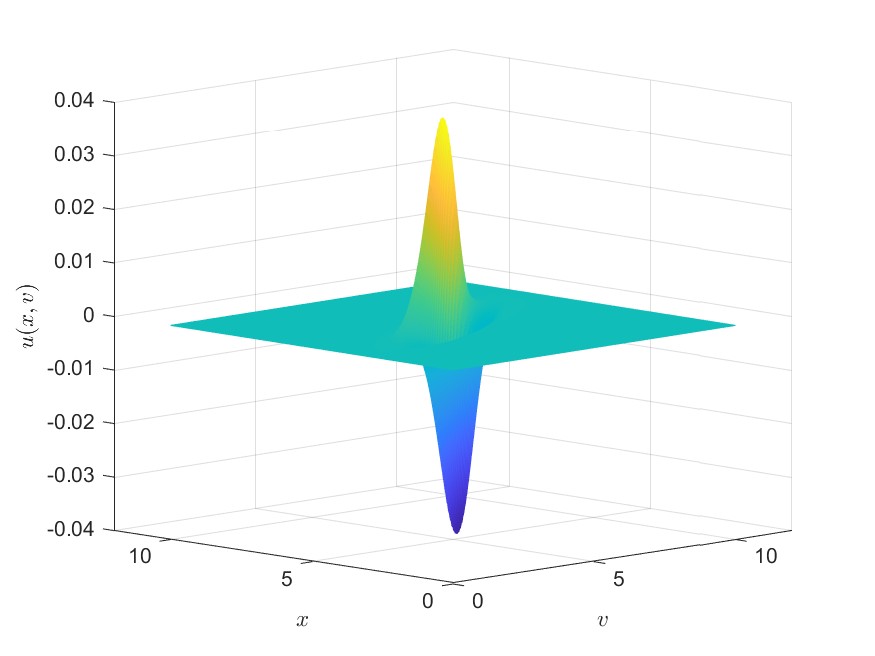}
		\caption{Difference between approximate and exact solution.}
		\label{fig2:sub2}
	\end{subfigure}
	
	\caption{Numerical solution at $T=1$ (left) with $h_v = 10^{-1}$ and $\delta = 10^{-2}$, and pointwise error with respect to the exact solution (right). The initial conditions are $x_0 = 0.1$ and $v_0 = 5$.}
	\label{fig2}
\end{figure}

\subsection{Comparison of MC and MLMC for the numerical solution of the SPDE}

In this experiment, we investigate the efficiency of the Multilevel Monte Carlo (MLMC) method compared to the standard Monte Carlo (MC) approach for approximating the expected value of a functional of the solution of the SPDE. We performed two tests, corresponding to different spatial resolutions: $n = 4$ and $n = 8$, with $\eps = n^{-2}$. Consequently, the number of independent runs for the standard MC approach was set as $D = \eps^{-2}$, resulting in $D = 256$ for $n = 4$ and $D = 4096$ for $n = 8$.

For the MLMC simulations, we first estimated the constants $c_1$ and $c_2$ from Theorem \ref{MLMC_theorem}. Based on preliminary tests:

\begin{itemize}
    \item For $n = 4$, we obtained $L = 2$ levels with $N_l = [52, 2, 1]$ samples per level.
    \item For $n = 8$, we obtained $L = 3$ levels with $N_l = [1637, 52, 2, 1]$ samples per level.
\end{itemize}

We want to point out that once the constants $c_1$ and $c_2$ have been estimated, there is no need to repeat the parameter estimation procedure when using different accuracy levels, as the values of $L$ and $N_l$ can be scaled accordingly.

The discretization parameters for each level $l = 0,\dots,L$ were set as
\begin{equation*}
\tau^{(l)} = 2^{-l}, \qquad h_{v}^{(l)} = 2^{- l / 2},
\end{equation*}
ensuring a proper scaling of the temporal and spatial resolutions across levels.

The quantity of interest, or \emph{payoff function}, was chosen as the $L_2$-norm of the SPDE solution, $P = \|U\|$. The reference exact value $P_\text{exact}$ was computed as the mean over all $D$ runs, yielding
\begin{equation*}
P_\text{exact} = 0.875063 \quad \text{for } n=4, \qquad 
P_\text{exact} = 0.875063 \quad \text{for } n=8.
\end{equation*}

The results, including the mean estimate, absolute and relative errors, and computational time, are summarized in Table~\ref{tab:MC_MLMC_results}. For MLMC, the reported time includes the additional time required to estimate the parameters $L$ and $N_l$.

\begin{table}[t!]
\centering
\caption{Comparison of standard Monte Carlo (MC) and Multilevel Monte Carlo (MLMC) for the SPDE test cases with $n=4$ and $n=8$. The absolute and relative errors are computed with respect to the reference value $P_{\text{exact}}$. For MLMC, the time reported in parentheses corresponds to the additional (one-time) cost of estimating the constants $c_1,c_2$ and the parameters $L$, $N_l$, which does not need to be repeated for different target accuracies $\varepsilon$.}
\label{tab:MC_MLMC_results}
\begin{tabular}{lcccc}
\toprule
Method & $\hat{Y}$ & Absolute Error & Relative Error & Time (s) \\
\midrule
\multicolumn{5}{c}{\textbf{Test 1: $n = 4$, $D = 256$, $L = 2$, $N_l = [52,2,1]$}} \\
Monte Carlo & 0.931161 & 0.0560985 & 0.064108 & 13.02 \\
Multilevel Monte Carlo & 0.907252 & 0.0321893 & 0.0367851 & 0.10 (+10.61) \\
\midrule
\multicolumn{5}{c}{\textbf{Test 2: $n = 8$, $D = 4096$, $L = 3$, $N_l = [1637,52,2,1]$}} \\
Monte Carlo & 0.896013 & 0.0209506 & 0.0239419 & 22509.72 \\
Multilevel Monte Carlo & 0.886517 & 0.0114547 & 0.0130902 & 6.15 (+10.61) \\
\bottomrule
\end{tabular}
\end{table}

\section{Conclusions}

In this work we studied the numerical approximation of degenerate Langevin-type SPDEs in two spatial dimensions, a class of equations that arises naturally in mathematical finance and in the filtering of partially observed diffusions. To handle the mixed deterministic-stochastic structure of the operator $\mathbf{Y} := \partial_t + v\partial_x$ together with the degeneracy of the diffusion in the $x$ direction, we introduced a semi-implicit Milstein finite difference scheme, treating the drift term implicitly and the diffusion term with a Milstein-type explicit correction.

Through a Fourier analysis of the scheme, we derived explicit conditions on the coefficients $a$ and $\sigma$ (namely $\sigma^2 < a$ and $2\sigma^4 < a^2$) under which the scheme is asymptotically mean-square stable, and we established a full convergence analysis, proving that the scheme achieves order one convergence in the time step $\delta$ and order two convergence in the velocity mesh size $h_v$, in the discrete $L^2$-in-space, $L^2$-in-probability norm. These theoretical rates were confirmed numerically, with the observed convergence orders matching the predicted first-order behavior in $\delta$ and second-order behavior in $h_v$.

We then embedded the proposed scheme within a Multilevel Monte Carlo (MLMC) framework in order to reduce the computational cost of estimating expectations of functionals of the SPDE solution. Exploiting the weak and strong error estimates obtained from the Fourier analysis, we showed that the MLMC complexity for achieving a root-mean-square error of order $\varepsilon$ is $\mathcal{O}(\varepsilon^{-3})$, a substantial improvement over the $\mathcal{O}(\varepsilon^{-5})$ complexity of a standard Monte Carlo approach. This theoretical gain was corroborated by numerical experiments, which showed that MLMC achieves comparable or better accuracy than standard Monte Carlo at a fraction of the computational time, with the advantage becoming more pronounced as the target accuracy increases.

Overall, our results demonstrate that combining a semi-implicit Milstein discretization with a multilevel Monte Carlo strategy provides an effective and theoretically grounded approach for the simulation of degenerate Langevin-type SPDEs. Several directions remain open for future research, including the extension of the scheme to the case of non-constant (space- and time-dependent) coefficients $a_t(x,v)$ and $\sigma_t(x,v)$, a more refined treatment of boundary effects on bounded domains, and the application of the proposed methodology to the pricing of path-dependent financial derivatives such as Asian options, where the degenerate structure of the SPDE plays a central role.

\section*{Acknowledgements}
The first author is member of the INdAM Research Group GNCS.\\
This research has been mainly developed during a research stay of S. Portaro at University of A Coru\~{n}a, supported by the Marco Polo Scholarship program of the Alma Mater Studiorum - Universit\`a di Bologna. Moroever, S. Portaro gratefully acknowledges the financial support from the Italian Ministry of University and Research (MUR) under the National Recovery and Resilience Plan (NRRP), Mission 4, Component 1, Investment 4.1 \textit{Extension of the number of research doctorates and innovative doctorates for public administration and cultural heritage}, funded by the European Union - NextGenerationEU. Project Title: \textit{Tecniche di Data Science in Infrastrutture HPC per il Monitoraggio Ambientale delle Valli del Reno, Lavino e Samoggia} - CUP: J33C23002500002 - Alma Mater Studiorum - Universit\`a di Bologna.\\
C. V\'azquez acknowledges the funding from the Galician Government through the grant ED431C 2022/047 (both including FEDER financial support). C. V\'azquez also acknowledges the support of CITIC, as a center accredited for excellence within the Galician University System and a member of the CIGUS Network, that receives subsidies from the Department of Education, Science, Universities, and Vocational Training of the Xunta de Galicia. Additionally, CITIC is co-financed by the EU through the FEDER Galicia 2021-27 operational program (Ref. ED431G 2023/01).

\bibliographystyle{acm}
\bibliography{bib1}

@article{Reisinger_Wang2019,
author = {Christoph Reisinger and Zhenru Wang},
title = {Stability and error analysis of an implicit {M}ilstein finite difference scheme for a two-dimensional {Z}akai {SPDE}},
journal = {BIT Numerical Mathematics},
year = {2019},
volume = {59},
publisher = {Springer Nature},
month = {jun},
url = {https://doi.org/10.1007/s10543-019-00761-8},
number = {4},
pages = {987--1029},
doi = {10.1007/s10543-019-00761-8}
}

@article{Polidoro_Mogavero1995,
author = {Polidoro, Sergio and Mogavero, C.},
year = {1995},
month = {12},
pages = {193-205},
title = {A finite difference method for a boundary value problem related to the {K}olmogorov equation},
volume = {32},
journal = {Calcolo},
doi = {10.1007/BF02575835}
}

@article{Pascucci_Pesce2022,
  title={Backward and forward filtering under the weak {H}{\"o}rmander condition},
  author={Andrea Pascucci and A. Pesce},
  journal={Stochastics and Partial Differential Equations: Analysis and Computations},
  year={2022},
  volume={11},
  pages={177-210},
  url={https://api.semanticscholar.org/CorpusID:257633845}
}

@article{Di_Francesco_Pascucci2004,
 ISSN = {13645021},
 URL = {http://www.jstor.org/stable/4143207},
 author = {Marco di Francesco and Andrea Pascucci},
 journal = {Proceedings: Mathematical, Physical and Engineering Sciences},
 number = {2051},
 pages = {3327--3338},
 publisher = {The Royal Society},
 title = {On the Complete Model with Stochastic Volatility by {H}obson and {R}ogers},
 urldate = {2025-06-11},
 volume = {460},
 year = {2004}
}

@article{Pascucci_Pesce2019,
author = {Pascucci, Andrea and Pesce, Antonello},
year = {2022},
title = {On stochastic {L}angevin and {F}okker-{P}lanck equations: the two-dimensional case},
journal={Journal of Differential Equations},
volume={310},
pages={443-483},
doi = {10.1016/j.jde.2021.11.004}
}

@article{Giles_Reisinger2012,
author = {Giles, Michael B. and Reisinger, Christoph},
title = {{Stochastic finite differences and multilevel Monte Carlo for a class of SPDEs in finance}},
journal = {SIAM Journal on Financial Mathematics},
volume = {3},
number = {1},
pages = {572-592},
year = {2012},
doi = {10.1137/110841916},

URL = { 
    
        https://doi.org/10.1137/110841916
    
    

},
eprint = { 
    
        https://doi.org/10.1137/110841916
    
    

}
}

@article{Kamm_Pagliarani_Pascucci2023,
title = {Numerical solution of kinetic {SPDE}s via stochastic {M}agnus expansion},
journal = {Mathematics and Computers in Simulation},
volume = {207},
pages = {189-208},
year = {2023},
issn = {0378-4754},
doi = {https://doi.org/10.1016/j.matcom.2022.12.029},
url = {https://www.sciencedirect.com/science/article/pii/S037847542200516X},
author = {Kevin Kamm and Stefano Pagliarani and Andrea Pascucci}
}

@book {Pascucci_book2011,
    AUTHOR = {Pascucci, Andrea},
     TITLE = {P{DE} and martingale methods in option pricing},
    SERIES = {Bocconi \& Springer Series},
    VOLUME = {2},
 PUBLISHER = {Springer, Milan; Bocconi University Press, Milan},
      YEAR = {2011},
     PAGES = {xviii+719},
      ISBN = {978-88-470-1780-1},
   MRCLASS = {91-02 (35K10 60G44 60G51 60H30 65-02 91G20 91G60 91G80)},
  MRNUMBER = {2791231},
MRREVIEWER = {Johan Tysk},
       DOI = {10.1007/978-88-470-1781-8},
       URL = {https://doi-org.ezproxy.unibo.it/10.1007/978-88-470-1781-8},
}

@Article{Giles2008a,
journal={Operations Research},
author={Michael B. Giles},
title={Multilevel Monte Carlo Path Simulation},
year={2008},
month={June},
pages={607-617},
volume={56},
number={3},
doi={10.1287/opre.1070.0496},
url={https://ideas.repec.org/a/inm/oropre/v56y2008i3p607-617.html},
}

@book{Lord_Powell_Shardlow_2014,
place={Cambridge},
series={Cambridge Texts in Applied Mathematics},
title={An Introduction to Computational Stochastic PDEs},
publisher={Cambridge University Press},
author={Lord, Gabriel J. and Powell, Catherine E. and Shardlow, Tony},
year={2014},
collection={Cambridge Texts in Applied Mathematics}
}

@book{LiuRoeckner2015,
  author    = {Wei Liu and Michael R{\"o}ckner},
  title     = {Stochastic Partial Differential Equations: An Introduction},
  series    = {Universitext},
  publisher = {Springer Cham},
  year      = {2015},
  doi       = {10.1007/978-3-319-22354-4},
  isbn      = {978-3-319-22354-4}
}

@book{JentzenKloeden2011,
  author    = {Arnulf Jentzen and Peter E. Kloeden},
  title     = {Taylor Approximations for Stochastic Partial Differential Equations},
  publisher = {Society for Industrial and Applied Mathematics},
  year      = {2011},
  doi       = {10.1137/1.9781611971171}
}

\end{document}